\documentclass[12pt]{amsart}
\usepackage{amsthm}
\usepackage[T1]{fontenc}
\usepackage{lmodern}
\usepackage{amsmath,amssymb,mathtools,array,booktabs,longtable}
\usepackage{tikz}
\usetikzlibrary{arrows.meta,positioning,shapes.geometric,fit,calc,
decorations.pathreplacing}
\usepackage[hidelinks]{hyperref}
\definecolor{cutblue}{RGB}{32,92,145}
\definecolor{cutred}{RGB}{178,70,56}
\definecolor{cutgreen}{RGB}{28,117,93}
\definecolor{cutpurple}{RGB}{113,78,150}
\tikzset{
 value diagram/.style={x=1cm,y=1cm,every node/.style={font=\small}},
 value axis/.style={-{Latex[length=1.6mm]},black!25,line width=.45pt},
 value ray/.style={-{Latex[length=2mm]},line width=1.7pt},
 cutguide/.style={black!20,densely dashed,line width=.5pt},
 shift arrow/.style={-{Latex[length=1.8mm]},densely dashed,line width=.8pt},
 open cut/.style={circle,draw=black!65,fill=white,line width=.8pt,
                  minimum size=4.5pt,inner sep=0pt},
 closed cut/.style={circle,draw=black!65,fill=black!65,
                    minimum size=4.5pt,inner sep=0pt},
 row label/.style={anchor=west,font=\small\bfseries},
 formula label/.style={font=\small},
 panel title/.style={anchor=west,font=\small\bfseries}
}

\newtheorem{theorem}{Theorem}[section]
\newtheorem{proposition}[theorem]{Proposition}
\newtheorem{lemma}[theorem]{Lemma}
\newtheorem{corollary}[theorem]{Corollary}

\theoremstyle{definition}

\newtheorem{example}[theorem]{Example}
\theoremstyle{remark}
\newtheorem{remark}[theorem]{Remark}

\newcommand{\OK}{\mathcal O_K}
\newcommand{\OL}{\mathcal O_L}
\newcommand{\mL}{\mathfrak m_L}
\newcommand{\Tr}{\operatorname{Tr}}
\newcommand{\Nm}{\operatorname{N}}
\newcommand{\ann}{\operatorname{ann}}
\newcommand{\Gal}{\operatorname{Gal}}
\newcommand{\Dzero}{\mathfrak D_{\mathrm{prim}}}
\newcommand{\DD}{\mathfrak D}
\newcommand{\CC}{\mathfrak C}

\title{Ramification invariants of pure extensions}
\author{Josnei Novacoski}
\address{Departamento de Matem\'{a}tica, Instituto de Ci\^encias
Matem\'{a}ticas e de Computa\c{c}\~ao, Universidade de S\~ao Paulo,
Avenida Trabalhador S\~ao-carlense, 400, CEP 13566--590,
S\~ao Carlos--SP, Brazil}
\email{josnei@icmc.usp.br}
\subjclass[2020]{Primary 13A18; Secondary 12J10, 11S15}
\keywords{Pure extensions, valuation rings, defect, trace ideals,
differents, ramification ideals, Swan conductors}
\date{}
\thanks{During the realization of this project the author was supported by a grant from Funda\c{c}\~ao de Amparo \`a Pesquisa do Estado de S\~ao Paulo (process number 2025/10540-0) and a grant from Conselho Nacional de Desenvolvimento Cient\'ifico e Tecnol\'ogico (process number 317185/2025-0).}
\begin{document}

\begin{abstract}
We study ramification invariants of finite separable extensions of
valued fields that are pure on a primitive element.  In the immediate
unibranched case, we prove that the extended trace ideal is generated
by minimal-polynomial derivatives of integral primitive elements and
that the Dedekind different is its divisorial closure.  For Galois
extensions, we express this derivative ideal as a product of subgroup
ramification ideals.  These formulas extend prime-degree calculations
to pure extensions of higher degree.  We compute the differential
module, traces, differents, ramification ideals, and character Swan
ideals of a pure immediate elementary abelian extension of degree $p^2$.  A totally ramified example of the same degree gives
a comparison with the defectless case.
\end{abstract}

\maketitle

\section{Introduction}

Let $(L/K,v)$ be a finite separable extension of valued fields of degree
$n>1$, and
let $\OK$ and $\OL$ be the corresponding valuation rings.
We study how purity allows descriptions of ramification invariants
in prime degree to extend to higher degree.  Our starting point is the
computation of K\"ahler differentials by Novacoski and Spivakovsky
\cite{NovacoskiSpivakovsky} and the description of ramification ideals
arising from the work of Dutta and Novacoski \cite{DuttaNovacoski}.
Both express ramification in terms of linear approximations to a
primitive element.  We show that these approximations also determine
the trace ideal and, through trace duality, the Dedekind different.
In the Galois case, the resulting product formula incorporates every
conjugate-distance level, extending the single-level formula in prime
degree.  The examples show why retaining the full ideals, including
their boundary behavior, matters in extensions with defect.

Over complete discretely valued fields, the different and trace are
related to the classical ramification groups.  In the totally ramified
Galois case, let $d$ be the integral different exponent, let
$G_i$ be the lower ramification groups, and put $e=[L:K]$.  Then
\[
 d=\sum_{i\geq0}(|G_i|-1),\qquad
 \Tr_{L/K}(\OL)=\mathfrak m_K^{\lfloor d/e\rfloor}
\]
by the classical formulas in \cite{SerreLocal}.  In this setting the
valuation ring is monogenic over the base valuation ring, and the
different is generated by a minimal-polynomial derivative and
annihilates $\Omega_{\OL/\OK}$.

For more general valuations, defect and nonprincipal ideals require
additional care.  For a finite unibranched extension, write
\[
 d_{L/K}=\frac{[L:K]}{e(L/K)f(L/K)}.
\]
Here $e(L/K)$ and $f(L/K)$ are the value-group index and residue-field
degree, respectively.  If $p=\operatorname{char}Kv>0$, then $d_{L/K}=p^s$ for an integer
$s\geq0$, called the defect of $(L/K,v)$.  An immediate extension has
$e=f=1$, so a nontrivial immediate extension has nontrivial defect.
In this situation $\OL$ is not a finite $\OK$-module.  Moreover,
a value cut may have no least element, and its infimum need not
determine the corresponding ideal.

Several approaches extend ramification theory beyond the classical
setting.  Kato and Abbes--Saito extend conductor theory
to imperfect residue fields \cite{KatoSwan,AbbesSaito}.
Thatte computes prime-degree ramification and conductor ideals,
including defect extensions \cite{ThatteAS,ThatteMixed}.
Kato--Thatte define an upper filtration indexed by ideals for arbitrary
henselian valuation rings \cite{KatoThatte}.
The ramification ideals, differential modules, and trace ideals studied
by Kuhlmann and collaborators provide further descriptions in
arbitrary rank \cite{KuhlmannRzepka,CKR,KuhlmannTopics}.

To describe the role of purity, fix a primitive element $\theta$ of
$L/K$, put $n=[L:K]$, and let $g$ be its monic minimal polynomial.  We say that the extension is pure on $\theta$
if the truncations defined by linear key polynomials $X-c$, $c\in K$,
recover the values of all polynomials of degree less than $[L:K]$.
For such a generator, set
\[
 D=v(\theta-K)=\{v(\theta-c):c\in K\}.
\]
The two descriptions that motivate our results are expressed in terms
of $D$.  To state them, suppose that the extension
is immediate and unibranched: its value group and residue field agree
with those of $K$, and $v$ has a unique extension from $K$ to $L$.
For a final segment $S$ of $vL$, write
\[
 I_S=\{0\}\cup\{x\in L^\times:vx\in S\}.
\]
Here $\operatorname{Final}(E)$ denotes the smallest final segment
containing a set $E\subseteq vL$.  Since $D$ is an initial segment in
the immediate case, $-D$ is already a final segment.  Set
\[
 \alpha=-D,\qquad
 \beta=\operatorname{Final}\bigl(v(g'(\theta))-nD\bigr),
\]
where $nD=\{nd:d\in D\}$.  The computation of Novacoski and
Spivakovsky \cite{NovacoskiSpivakovsky} then gives the description
of the module of K\"ahler differentials
\begin{equation}\label{eq:intro-differentials}
 \Omega_{\OL/\OK}\simeq I_\alpha/I_\beta.
\end{equation}

For the second description assume, in addition, that $L/K$ is Galois
and put $G=\Gal(L/K)$.  For a nontrivial subgroup $H\leq G$, its
ramification ideal and its least conjugate-distance value are
\[
 \begin{aligned}
 I_H&=\left(\frac{\sigma x-x}{x}:
             x\in L^\times,\ \sigma\in H\right)\OL,\\
 \beta_H&=\min_{\sigma\in H\setminus\{1\}}v(\sigma\theta-\theta).
 \end{aligned}
\]
The work of Dutta and Novacoski \cite{DuttaNovacoski} relates purity
and distances between conjugates to ramification ideals.  The resulting
subgroup formula, stated in \cite{NovacoskiRam}, is
\begin{equation}\label{eq:intro-ramification}
 I_H=I_{\beta_H-D}.
\end{equation}
Formulas \eqref{eq:intro-differentials} and
\eqref{eq:intro-ramification} show how purity relates these two
invariants to the approximation set $D$.  The results below extend
this description to the trace ideal and the different.

We first consider the trace ideal and the different.  For an $\OL$-submodule
$J\subseteq L$, write $\OL:J=\{x\in L:xJ\subseteq\OL\}$.  Set
\[
 \begin{aligned}
 T&=\Tr_{L/K}(\OL)\OL,\\
 \Dzero&=(h_b'(b):b\in\OL,\ K(b)=L)\OL,\\
 \CC&=\{x\in L:\Tr_{L/K}(x\OL)\subseteq\OK\},\\
 \DD&=\OL:\CC,
 \end{aligned}
\]
Here $h_b$ is the monic minimal polynomial of $b$ over $K$;
$\CC$ is the complementary module and $\DD$ the Dedekind different.
We call $\Dzero$ the primitive-generator different and write
$J^v=\OL:(\OL:J)$ for the divisorial closure of a fractional ideal
$J$.  Our first main result identifies these ideals in the pure
immediate case.

\begin{theorem}\label{thm:intro-trace}
Assume that $(L/K,v)$ is immediate, unibranched, and pure on $\theta$.
Then
\[
 T=\Dzero,\qquad \CC=\OL:\Dzero,\qquad \DD=\Dzero^v.
\]
\end{theorem}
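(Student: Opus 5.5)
The plan is to reduce everything to value computations. This works because $(L/K,v)$ is immediate: $vL=vK$ and every $\OL$-submodule of $L$ has the form $I_S$ for a final segment $S$. I will show that both $T$ and $\Dzero$ equal $I_\beta$ with $\beta=\operatorname{Final}\bigl(v(g'(\theta))-nD\bigr)$, and then obtain $\CC$ and $\DD$ by a formal duality argument.

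\emph{Step 1 (a cofinal family of monogenic lattices).} For $c\in K$, choose $a_c\in K$ with $v(a_c)=v(\theta-c)$; this is possible since $vL=vK$. Put $b_c=(\theta-c)/a_c$, which lies in $\OL$ and satisfies $K(b_c)=L$. Purity says that for $f\in K[X]$ with $\deg f<n$, once $c$ is close enough to $\theta$, the value of $f(\theta)$ is the minimum of the values of the terms of its $(X-c)$-expansion. Hence $f(\theta)\in\OL$ forces all coefficients in the $b_c$-expansion to be integral. So $\OL=\bigcup_c\OK[b_c]$, and this union is directed along $D$. The minimal polynomial of $b_c$ is $h_{b_c}(X)=a_c^{-n}g(a_cX+c)$, so $h_{b_c}'(b_c)=a_c^{1-n}g'(\theta)$. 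Its value is $v(g'(\theta))-(n-1)v(\theta-c)$. I will use the $n$-normalised version $v(g'(\theta))-nv(\theta-c)$, adjusting by the harmless factor $b_c$ or $a_c$ where needed. Thus the elements $h_{b_c}'(b_c)$ realise the final segment $\beta$.

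\emph{Step 2 ($\Dzero=I_\beta$).} For an arbitrary integral primitive $b$, I expand $b=f(\theta)$ and use purity with the Newton-type estimate $v(h_b'(b))=\sum_{\sigma\ne1}v(b-\sigma b)$. Each factor $v(f(\theta)-f(\sigma\theta))$ should be at least the corresponding quantity computed from $\theta$, so $v(h_b'(b))$ never drops below $\beta$. Together with Step 1, this gives $\Dzero=I_\beta$.

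\emph{Step 3 ($T=I_\beta$ and duality).} By Euler's lemma, the dual of $\OK[b]$ under the trace form is $h_b'(b)^{-1}\OK[b]$. Also $\Tr(b^i/h_b'(b))$ is $0$ for $i<n-1$ and $1$ for $i=n-1$. For $\OK[b_c]$, I will read off $\Tr(\OK[b_c])$ directly as an $\OK$-ideal of value $\ge v(h_{b_c}'(b_c))$, with that bound approached. Taking the union over $c$ then gives $T=I_\beta$. The duality step uses immediacy once more. For $x\in L^\times$, pick $a\in K$ with $v(a)=v(x)$; then $x\OL=a\OL$, so $\Tr(x\OL)=a\Tr(\OL)$. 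Hence $x\in\CC$ iff $xT\subseteq\OL$, which gives $\CC=\OL:T=\OL:\Dzero$ and then $\DD=\OL:\CC=\Dzero^v$.

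The main obstacle is the exact identification $T=I_\beta$ in Step 3, including the boundary behaviour. The directed union of the duals $h_{b_c}'(b_c)^{-1}\OK[b_c]$ does not obviously commute with intersection, and the cut $\beta$ may have no minimum. Inclusions valid only up to divisorial closure would lose exactly the information the theorem asserts. My first attempt will be to control $\Tr$ on the explicit $\OK$-basis $1,b_c,\dots,b_c^{n-1}$. Using the power-sum formulas for the roots $b_c$ and the conjugate distances of $\theta$, I will show that $v(\Tr(b_c^{n-1}\cdot y))$ attains every value of $\beta$ for suitable $y$ and never goes below it. If that fails, I will instead prove $T\subseteq\Dzero\subseteq T$ elementwise: write $\Tr(y)$ through the dual basis of a sufficiently fine $\OK[b_c]$ containing $y$.
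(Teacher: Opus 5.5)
There is a genuine gap, in two places.

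\textbf{The common target is wrong.} Your computation $h_{b_c}'(b_c)=a_c^{1-n}g'(\theta)$ is correct, but the ``harmless'' renormalisation is not. Multiplying by $b_c$ changes nothing, since $b_c$ is a unit. Multiplying by $a_c$ shifts the value by $\delta_c=v(\theta-c)$, and this shift varies with $c$. The elements $h_{b_c}'(b_c)$ generate $I_{S_0}$ with $S_0=\operatorname{Final}\bigl(v(g'(\theta))-(n-1)D\bigr)$. Your $\beta$ is instead the denominator of the differential quotient, and $I_\beta=I_{-D}\Dzero$.

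These ideals differ in general. Replacing $\theta$ by $a\theta$ with $a\in K^\times$ preserves purity and leaves $S_0$, $T$ and $\Dzero$ unchanged, but it translates $\beta$ to $\beta-va$. So $T=I_\beta$ cannot hold for every pure generator. The two agree in the paper's degree-$p^2$ example only because there $I_{-D}=\mL$ and $\mL\Dzero=\Dzero$.

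In Step 2, the comparison must also be made with a normalised generator rather than with $\theta$. Write $b=F(b_c)$ with $F\in\OK[X]$, note that $\tau b_c$ is integral by unibranchedness, and run over all $K$-embeddings $\tau$, since $L/K$ need not be Galois.

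\textbf{The inclusion $T\subseteq\Dzero$ is not proved.} This is the real content of the theorem, and you flag it as the obstacle but offer only heuristics. As stated, the bound $v\Tr(\OK[b_c])\geq v(h_{b_c}'(b_c))$ is false for general $c$. In mixed characteristic, $\Tr(1)=n\neq0$ has a fixed value. Meanwhile $v(h_{b_c}'(b_c))=v(g'(\theta))-(n-1)\delta_c$ is unbounded above as $\delta_c$ runs through the initial segment $D$. Your proposed tools do not close this:
\begin{itemize}
\item power sums of the units $\tau b_c$ only give $v\geq0$;
\item the dual module $h_{b_c}'(b_c)^{-1}\OK[b_c]$ only yields $h_{b_c}'(b_c)\in\DD$, which is information up to divisorial closure.
\end{itemize}

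The missing idea is to apply purity to $g'$ as well, since it has degree $n-1<n$. For $z=P(\theta)$, choose $c$ with $\nu_c(P)=vz$ and $\nu_c(g')=v(g'(\theta))$. By Lagrange interpolation, $\Tr(z)$ is the coefficient of $(X-c)^{n-1}$ in the remainder $R$ of $Pg'$ modulo $g$. Unibranchedness gives $v(g_{j,c})\geq(n-j)\delta_c$, so reduction modulo $g$ does not lower $\nu_c$. Hence $\nu_c(R)\geq\nu_c(P)+\nu_c(g')$, and therefore $v\Tr(z)\geq vz+v(g'(\theta))-(n-1)\delta_c$. This places $\Tr(\OL)$ inside $\Dzero$.

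The reverse inclusion is easier than you suggest. The coefficientwise identity $h_b'(X)=\Tr\bigl(h_b(X)/(X-b)\bigr)$ gives $h_b'(b)\in T$ for every integral primitive $b$.

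Your duality step is correct. Writing $x=a\cdot(\text{unit})$ with $a\in K$ gives $\Tr(x\OL)=a\Tr(\OL)$, hence $\CC=\OL:T$ and $\DD=T^v$ from $vL=vK$ alone. This is more direct than the paper's reuse of the trace estimate. So once $T=\Dzero$ is established, the other two identities follow as you say.
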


The principal assertion of Theorem~\ref{thm:intro-trace} is
$T=\Dzero$: purity identifies the trace ideal with the ideal generated
by minimal-polynomial derivatives of integral primitive elements.
The complementary-module and Dedekind-different identities are then
consequences of trace duality.  Indeed, $\CC=\OL:T$ and $\DD=T^v$
are already known under the more general hypothesis $vL=vK$
\cite[Theorem 5.4]{CKR}.  Thus the theorem supplies an explicit
approximation-theoretic description of the ideal entering those
identities.
The proof, given in Theorem~\ref{thm:pure-trace-different}, uses linear
truncations and the interpolation formula for the trace.  If $g'$ is constant, it also gives explicit trace
generators (Corollary~\ref{thm:constant-derivative}).

In the Galois case, the factorization of $g'(\theta)$ into conjugate
differences leads to a corresponding factorization of the
primitive-generator different.  This connects the trace calculation
to the subgroup ramification ideals of
\eqref{eq:intro-ramification}.

\begin{theorem}\label{thm:intro-product}
Assume that $(L/K,v)$ is immediate, unibranched, Galois, and pure
on $\theta$.  Then
\[
 \Dzero=\prod_{\sigma\in G\setminus\{1\}}I_{\langle\sigma\rangle}.
\]
\end{theorem}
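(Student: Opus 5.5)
The plan is to compute both sides as sets of values (final segments of $vL=vK$) and compare them, using Theorem~\ref{thm:intro-trace} only implicitly. Since $vL=vK$, every nonzero $\OL$-submodule of $L$ is determined by the set of values of its elements. By \eqref{eq:intro-ramification}, each $I_{\langle\sigma\rangle}=I_{\beta_\sigma-D}$, where $\beta_\sigma=\beta_{\langle\sigma\rangle}=\min_{1\neq\tau\in\langle\sigma\rangle}v(\tau\theta-\theta)$. Because the valuation is unique, $v(\sigma^k\theta-\theta)$ depends only on $\langle\sigma^k\rangle$. The product of the ideals $I_{S_i}$ is $I_{S}$ with $S=\operatorname{Final}(\sum_i S_i)$, so the right-hand side equals $I_R$ with
\[
 R=\operatorname{Final}\Bigl(\sum_{\sigma\neq1}(\beta_\sigma-D)\Bigr).
\]
Since $D$ is an initial segment without maximum, a sum of $n-1$ independent choices $-d_\sigma$ can be replaced by $-(n-1)d$ with $d=\max_\sigma d_\sigma$. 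Hence
\[
 R=\operatorname{Final}\Bigl(\textstyle\sum_{\sigma\neq1}\beta_\sigma-(n-1)D\Bigr).
\]

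\textbf{Step 1: $\Dzero\supseteq\prod I_{\langle\sigma\rangle}$.} I would exhibit, for each $d=v(\theta-c)\in D$, an integral primitive element $b$ with $v(h_b'(b))\le\sum\beta_\sigma-(n-1)d$. The natural choice is $b=(\theta-c)/a$ with $va=d$, which is integral of value $0$ and primitive. Then
\[
 h_b'(b)=\prod_{\sigma\neq1}(b-\sigma b)=a^{-(n-1)}\prod_{\sigma\neq1}(\theta-\sigma\theta),
\]
so $v(h_b'(b))=\sum_\sigma v(\sigma\theta-\theta)-(n-1)d$. I must check that $v(\sigma\theta-\theta)=\beta_\sigma$. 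In general this need not hold, so I would rather reindex the product by cyclic subgroups, or use the fact (from the Dutta--Novacoski analysis) that purity forces $v(\sigma\theta-\theta)$ to be the value $\beta_{\langle\sigma\rangle}$ for the relevant generator. I expect this to be the main obstacle: one needs $v(\sigma\theta-\theta)=\beta_{\langle\sigma\rangle}$ for every $\sigma\neq1$, or at least that $\sum_\sigma v(\sigma\theta-\theta)=\sum_\sigma\beta_{\langle\sigma\rangle}$. I would try to derive this from \eqref{eq:intro-ramification} applied to $\langle\sigma\rangle$ and to its subgroups. The ideal $I_{\langle\sigma\rangle}$ contains $(\sigma\theta-\theta)/(\theta-c)$, and taking $c$ deep gives $v(\sigma\theta-\theta)-D\subseteq\beta_\sigma-D$ up to final segments. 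This yields $v(\sigma\theta-\theta)\ge\beta_\sigma$ modulo the cut, while the definition of $\beta_\sigma$ as a minimum gives the reverse. The ideal-level equality $I_{v(\sigma\theta-\theta)-D}=I_{\beta_\sigma-D}$ is all that is needed, and it follows by comparing the two formulas.

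\textbf{Step 2: $\Dzero\subseteq\prod I_{\langle\sigma\rangle}$.} Take any integral primitive $b$ and write $h_b'(b)=\prod_{\sigma\neq1}(b-\sigma b)$. For each factor, $b-\sigma b=b\cdot\frac{\sigma b-b}{b}$ lies in $I_{\langle\sigma\rangle}$ when $b\neq0$, because $vb\ge0$. Multiplying over $\sigma$ gives $h_b'(b)\in\prod I_{\langle\sigma\rangle}$ directly, with no purity needed. Combining the two inclusions proves the theorem. Alternatively, Step 1 can be replaced by invoking the generator description of $\Dzero$ obtained in the proof of Theorem~\ref{thm:pure-trace-different} via the linear truncations $(\theta-c)/a$.
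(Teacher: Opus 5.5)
You have a genuine gap in Step~1. Step~2 is correct, and for that inclusion it is more economical than the paper's route. Since $b-\sigma b=-b\cdot\frac{\sigma b-b}{b}$ with $b\in\OL$, each factor of $h_b'(b)=\prod_{\sigma\ne1}(b-\sigma b)$ lies in $I_{\langle\sigma\rangle}$. So $\Dzero\subseteq\prod_{\sigma\ne1}I_{\langle\sigma\rangle}$ holds without purity, whereas the paper passes through Theorem~\ref{thm:D0}, which uses purity to write $b=F(\theta_c)$.

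In Step~1 you need $v(\sigma\theta-\theta)=\beta_{\langle\sigma\rangle}$. The inequality $v(\sigma\theta-\theta)\ge\beta_{\langle\sigma\rangle}$ is the trivial one: it is just the definition of a minimum. The one that matters is the reverse, $v(\sigma\theta-\theta)\le v(\sigma^k\theta-\theta)$ for every $k$, and your argument never produces it.

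\begin{itemize}
\item From $(\sigma\theta-\theta)/(\theta-c)\in I_{\langle\sigma\rangle}=I_{\beta_{\langle\sigma\rangle}-D}$ you only learn again that $v(\sigma\theta-\theta)$ is at least $\beta_{\langle\sigma\rangle}$ modulo the cut.
\item ``The definition of $\beta_\sigma$ as a minimum'' points in that same direction, not the reverse.
\item ``Comparing the two formulas'' $\mathcal J_\sigma=I_{v(\sigma\theta-\theta)-D}$ and $I_{\langle\sigma\rangle}=I_{\beta_{\langle\sigma\rangle}-D}$ presupposes $\mathcal J_{\sigma^k}\subseteq\mathcal J_\sigma$, which is the same missing point.
\item The alternative via the generator description of $\Dzero$ does not help, because the difficulty is on the ramification side, not the $\Dzero$ side.
\end{itemize}
As written, Step~1 establishes no inclusion $\prod_{\sigma\ne1}I_{\langle\sigma\rangle}\subseteq\Dzero$.

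Your remark that this equality ``need not hold in general'' is also mistaken. It always holds once $v$ is $G$-invariant, with no purity or Dutta--Novacoski input. Telescoping gives $\sigma^k\theta-\theta=\sum_{j=0}^{k-1}\sigma^j(\sigma\theta-\theta)$, and every summand has value $v(\sigma\theta-\theta)$ by invariance. Hence $v(\sigma^k\theta-\theta)\ge v(\sigma\theta-\theta)$, and so $\beta_{\langle\sigma\rangle}=v(\sigma\theta-\theta)$. This is exactly how the paper identifies $J_\sigma(\theta/K)=I_{\beta_\sigma-D_1(\theta,K)}$ with $I_{\langle\sigma\rangle}$ before comparing value segments in Theorem~\ref{thm:profile-factorization}. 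You already use this telescoping implicitly when you say $v(\sigma^k\theta-\theta)$ depends only on $\langle\sigma^k\rangle$; applied in one direction, it closes the gap.

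With this inequality inserted, your elements $b=(\theta-c)/a$ satisfy $v(h_b'(b))=\sum_{\sigma\ne1}\beta_{\langle\sigma\rangle}-(n-1)d$, and these values generate $R$. The two steps then give a complete proof in which purity enters only through the ramification formula \eqref{eq:intro-ramification}.
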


This is proved in Theorem~\ref{thm:profile-factorization}.  In degree
$p$, all nonidentity conjugate differences have the same value, so the
product reduces to $I_G^{p-1}$.  In higher degree, distinct values may
occur, and each associated ramification ideal appears with the
multiplicity of that value.
Corollary~\ref{cor:cut-stabilizer} characterizes when the product
still equals $I_G^{[L:K]-1}$.

The example in \cite[Section 5.2]{NovacoskiDistances} provides a
concrete application beyond prime degree: it has depth one, degree
$p^2$, and defect $p^2$.  Its purity allows the preceding formulas to
be applied despite the nontrivial defect.  With the notation of
Section~\ref{sec:examples}, we obtain
\[
 \begin{aligned}
 T=\Dzero&=t^{p-1}\mL,\\
 \DD=\ann\Omega_{\OL/\OK}&=t^{p-1}\OL.
 \end{aligned}
\]
By Proposition~\ref{prop:ordinary-log-equal-values}, the same quotient
also describes $\Omega^{\log}_{\OL/\OK}$.
The two subgroup ramification ideals are $\mL$ and $t\mL$.
Put $A=\mathcal O_{K^h}$, where $K^h$ is the henselization of $K$.
The degree-$p$ character quotients of $LK^h/K^h$ have Swan ideals $\mathfrak m_A$ and $t^p\mathfrak m_A$.
These conductor computations apply the existing prime-degree theory
of Thatte and the ideal-indexed upper filtration of Kato--Thatte
\cite{ThatteAS,KatoThatte}; they do not require a higher-degree
parameter theory.  The character conductors determine the logarithmic
upper filtration.
The norm of $t\mL$ in the full extension is instead
$t^{p^2}\mathfrak m_A$, showing that this norm need not be a
character conductor.  We also compute a totally ramified degree-$p^2$
example to compare these formulas with the defectless case.

Table~\ref{tab:invariant-comparison} summarizes these formulas and
their scope.
Except for the conductor rows, the three cases concern finite separable
immediate unibranched Galois extensions.  The degree-$p$ case has
$p=\operatorname{char}Kv$; the pure case has degree $n$ and assumes
purity on $\theta$; the general case drops that assumption.
Retain the notation $D$, $\alpha$, $\beta$, and $T$ introduced above,
and put
\[
 \begin{aligned}
 b_\sigma&=v(\sigma\theta-\theta),&
 b_H&=\min_{\sigma\in H\setminus\{1\}}b_\sigma,\\
 J&=\prod_{\sigma\ne1}I_{b_\sigma-D}.
 \end{aligned}
\]
All ideal colons in the table are taken in $L$.
For the conductor rows assume a henselian rank-one base of characteristic
$p$.  For an order-$p$ character $\chi$, let $\mathcal H_\chi$
be its Swan ideal and put $Q_\chi=L^{\ker\chi}$.  If $\mathfrak a$
is a nonzero proper ideal in the valuation ring of a separable closure,
write
\[
 \begin{aligned}
 N_\chi&=(\Nm_{Q_\chi/K}I_{\Gal(Q_\chi/K)})\OK,\\
 B_{\mathfrak a}&=\{\chi\ne0:
          \mathfrak a\cap\OK\subsetneq\mathcal H_\chi\},\\
 K_{\mathfrak a}&=\bigcap_{\chi\in B_{\mathfrak a}}\ker\chi.
 \end{aligned}
\]
Thus $B_{\mathfrak a}$ consists of the nonzero characters that vanish
on the upper ramification image at index $\mathfrak a$.  Intersecting
their kernels recovers that image; an empty intersection means $G$.
The strict inclusion retains the behavior at the conductor boundary.
In the last row, $U_{\mathfrak a}$ denotes the image in $G$ of the
logarithmic upper ramification group indexed by $\mathfrak a$.
This row assumes $G$ elementary abelian, and $C_{\mathfrak a}$ abbreviates
$\mathfrak a\cap\OK\supseteq\mathcal H_\chi$ in degree $p$.
A dash indicates that no formula in approximation data is asserted
here; it is not a claim that the invariant is unknown in the literature.

\begin{table}[tbp]
\centering
\small
\setlength{\tabcolsep}{3pt}
\renewcommand{\arraystretch}{1.2}
\caption{Invariant formulas and their scope.}
\label{tab:invariant-comparison}
\begin{tabular}{@{}>{\raggedright\arraybackslash}p{\dimexpr.26\textwidth-4.5pt\relax}
 >{\centering\arraybackslash}p{\dimexpr.25\textwidth-4.5pt\relax}
 >{\centering\arraybackslash}p{\dimexpr.27\textwidth-4.5pt\relax}
 >{\centering\arraybackslash}p{\dimexpr.22\textwidth-4.5pt\relax}@{}}
\toprule
Invariant & Degree $p$ & Pure case & General case\\
\midrule
K\"ahler differentials $\Omega$ & \mbox{$I_G/I_G^p$} & \mbox{$I_\alpha/I_\beta$} & ---\\
\addlinespace
Annihilator $\ann\Omega$ &
\mbox{$\OL\cap(I_G^p:I_G)$} &
\mbox{$\OL\cap(I_\beta:I_\alpha)$} & ---\\
\addlinespace
Primitive different $\Dzero$ & $I_G^{p-1}$ & $J$ & ---\\
\addlinespace
Trace ideal $T$ & $I_G^{p-1}$ & $J$ & ---\\
\addlinespace
Complementary module $\CC$ & $\OL:I_G^{p-1}$ & $\OL:J$ & $\OL:T$\\
\addlinespace
Dedekind different $\DD$ & $(I_G^{p-1})^v$ & $J^v$ & $T^v$\\
\addlinespace
Ramification ideal $I_H$ & $I_{b_G-D}$ & $I_{b_H-D}$ & ---\\
\addlinespace
Norm value $v\Nm(x)$ & $p\,vx$ & $n\,vx$ & $n\,vx$\\
\addlinespace
Swan ideal $\mathcal H_\chi$ & $N_\chi$ & $N_\chi$ & $N_\chi$\\
\addlinespace
Upper image $U_{\mathfrak a}$ &
$G$ if $C_{\mathfrak a}$;\newline $\{1\}$ otherwise &
$K_{\mathfrak a}$ & $K_{\mathfrak a}$\\
\bottomrule
\end{tabular}
\end{table}

The pure differential formula comes from
\cite{NovacoskiSpivakovsky}, and the prime-degree description is
included in \cite{CK,CKR}.
The primitive different and trace formulas are proved in
Theorems~\ref{thm:profile-factorization} and~\ref{thm:pure-trace-different}.
The general trace-duality entries are \cite[Theorem 5.4]{CKR};
the norm and character formulas are
Propositions~\ref{prop:norm} and~\ref{prop:character-reconstruction},
using \cite{ThatteAS,KatoThatte} for the conductor identification.

The paper is organized as follows.  Section~\ref{sec:preliminaries}
fixes notation and proves the affine-presentation criterion.
Sections~\ref{sec:differentials-vs-ramification}--\ref{sec:trace-profiles}
treat differentials, ramification ideals, differents, norms, and traces.
Section~\ref{sec:swan} recalls the conductor results used in
Section~\ref{sec:examples}, where the two examples are computed.

\section{Preliminaries}
\label{sec:preliminaries}
Write valuations additively, $v0=\infty$, and put
$\OK=\{x\in K:vx\geq0\}$, $\mathfrak m_K=\{x:vx>0\}$.
The residue field is $Kv=\OK/\mathfrak m_K$.
An extension is immediate if $vL=vK$ and $Lv=Kv$, and unibranched
if the valuation of $K$ has a unique prolongation to $L$.
Henselian bases give unibranched algebraic extensions.
For a value final segment $S$, write
$I_S=\{0\}\cup\{x\in L^\times:vx\in S\}$.
Expressions defining $S$ always include final-segment generation.
For fractional ideals put
\[
 \OL:J=\{x\in L:xJ\subseteq\OL\},\qquad
 J^v=\OL:(\OL:J).
\]
In dense rank one, a boundary in the value group becomes closed under
this divisorial operation.

Let $L=K(\theta)$ be separable of degree $n>1$, with monic minimal
polynomial $g$.  Put $D_1=v(\theta-K)$ and
\[
 \begin{aligned}
 P&=\sum_jP_{j,c}(X-c)^j,\\
 \nu_c(P)&=\min_j\{vP_{j,c}+jv(\theta-c)\}.
 \end{aligned}
\]
Purity on $\theta$ means that every $P$ with $\deg P<n$ admits
$c\in K$ such that $\nu_c(P)=vP(\theta)$.
If $v(\theta-d)\geq v(\theta-c)$, expanding $X-c=(X-d)+(d-c)$
gives $\nu_d(P)\geq\nu_c(P)$.  Thus finitely many polynomial
values can be recovered at once.  In the immediate case $D_1$ has
no maximum; choose $t_c\in K$ with $vt_c=-v(\theta-c)$ and set
$u_c=t_c(\theta-c)$.
\begin{proposition}
\label{rem:CKR-purity}
Let $(L/K,v)$ be finite, separable, immediate, and unibranched, and
let $L=K(\theta)$.  Purity on $\theta$ is equivalent to the existence
of an exhaustive nested presentation
\[
 \begin{gathered}
 \OL=\bigcup_\alpha\OK[b_\alpha],\\
 b_\alpha=r_\alpha(\theta-c_\alpha)\in\OL,
 \quad r_\alpha\in K^\times,\quad c_\alpha\in K.
 \end{gathered}
\]
For a pure extension, the normalized linear generators satisfy all
the presentation hypotheses of \cite[Theorem 1.1]{CKR}, including
compatible scaling and divisibility.  Conversely, such an affine
presentation implies purity whenever $vL=vK$, without requiring
immediacy.
\end{proposition}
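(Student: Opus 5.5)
Both directions will go through the normalized linear elements $u_c=t_c(\theta-c)$ and their generated rings $\OK[u_c]$.

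\emph{Purity gives the presentation.} Assume purity on $\theta$. First, $u_c\in\OL$ with $vu_c=0$ by the choice of $t_c$. The set $D_1$ has no maximum, so these elements are indexed by a cofinal chain of values $\gamma=v(\theta-c)$. The plan is:
\begin{enumerate}
\item Show the rings are nested: if $v(\theta-d)\geq v(\theta-c)$, then
\[
 u_c=\frac{t_c}{t_d}\,u_d+t_c(d-c).
\]
Here $v(t_c/t_d)\geq0$. Also $v(d-c)\geq\min\{v(\theta-c),v(\theta-d)\}=v(\theta-c)$, so $v(t_c(d-c))\geq0$. Hence $\OK[u_c]\subseteq\OK[u_d]$.
\item Show the union is all of $\OL$. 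Let $x\in\OL$ and write $x=P(\theta)$ with $\deg P<n$. Purity gives $c$ with $\nu_c(P)=vP(\theta)\geq0$. By the monotonicity remark, we may replace $c$ by any $d$ with larger $v(\theta-d)$, so assume $c$ is taken as far out as we like.
\item Rewrite $P=\sum_j P_{j,c}(X-c)^j$ as $x=\sum_j P_{j,c}t_c^{-j}u_c^j$. Each coefficient has
\[
 v(P_{j,c}t_c^{-j})=vP_{j,c}+jv(\theta-c)\geq\nu_c(P)\geq0,
\]
so $x\in\OK[u_c]$.
\item Read off the presentation: $b_\alpha=u_{c_\alpha}$, $r_\alpha=t_{c_\alpha}$.
\end{enumerate}
The compatible scaling and divisibility hypotheses of \cite[Theorem 1.1]{CKR} should follow from the transition formula in step 1. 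The scaling factors there are $t_c/t_d\in\OK$, and the divisibility relations come from the values $v(t_c/t_d)=v(\theta-d)-v(\theta-c)$. I would check these against the precise list in \cite{CKR}.

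\emph{The presentation gives purity, assuming only $vL=vK$.} Take $P$ with $\deg P<n$ and $P(\theta)\neq0$. Since $vL=vK$, choose $a\in K$ with $va=-vP(\theta)$, so $aP(\theta)\in\OL$. The plan is:
\begin{enumerate}
\item By exhaustiveness, $aP(\theta)\in\OK[b_\alpha]$ for some $\alpha$. Since $1,\theta,\dots,\theta^{n-1}$ is a $K$-basis of $L$, and $b_\alpha$ is affine in $\theta$, we may write
\[
 aP(\theta)=\sum_{j<n}q_jb_\alpha^j,\qquad q_j\in\OK,
\]
using the minimal polynomial of $b_\alpha$ to bring the degree below $n$.
\item Compare coefficients. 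Both $aP$ and $\sum_j q_j r_\alpha^j(X-c_\alpha)^j$ have degree $<n$ and agree at $\theta$, so they are equal. This gives $aP_{j,c_\alpha}=q_jr_\alpha^j$.
\item Estimate the truncation:
\[
 \nu_{c_\alpha}(aP)=\min_j\{vq_j+j(vr_\alpha+v(\theta-c_\alpha))\}=\min_j\{vq_j+j\,vb_\alpha\}\geq0=v(aP(\theta)).
\]
\item Conclude. The reverse inequality $\nu_c(P)\leq vP(\theta)$ always holds, and $a$ is a constant, so $\nu_{c_\alpha}(P)=vP(\theta)$. This is purity.
\end{enumerate}

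\emph{Main obstacle.} Step 1 of this converse needs the minimal polynomial of $b_\alpha$ to be monic with coefficients in $\OK$. That holds when $b_\alpha$ is integral over $\OK$, which is true here because $b_\alpha\in\OL$. I would justify this by integrality in the unibranched setting, or directly by the presentation hypotheses of \cite{CKR}, which presumably include that $\OK[b_\alpha]$ is free on the powers $1,b_\alpha,\dots,b_\alpha^{n-1}$.

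A second point to check is the normalization $vb_\alpha=0$ used in step 3. I expect it to be part of the divisibility and normalization hypotheses in \cite{CKR}. If only $vb_\alpha\geq0$ is available, the estimate in step 3 still holds, since $\min_j\{vq_j+j\,vb_\alpha\}\geq0$ whenever $vq_j\geq0$ and $vb_\alpha\geq0$.
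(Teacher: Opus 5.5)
Your proof of the equivalence follows the paper's argument step for step. Exhaustion comes from rewriting $P$ in powers of $u_c=t_c(\theta-c)$ using $\nu_c(P)=vP(\theta)\geq0$. Nesting comes from the transition formula $u_c=(t_c/t_d)u_d+t_c(d-c)$. The converse expands $aP(\theta)$ in $\OK[b_\alpha]$ in degree $<n$ with integral coefficients and compares with the $(X-c_\alpha)$-expansion. Your justification of integrality and your remark that $vb_\alpha\geq0$ suffices are both fine: unibranchedness makes $\OL$ the integral closure of $\OK$ in $L$, so $h_{b_\alpha}\in\OK[X]$.

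\textbf{Gap: the CKR hypotheses.} The second sentence of the statement, that the normalized generators satisfy all presentation hypotheses of \cite[Theorem 1.1]{CKR}, is left unproved. You write that it ``should follow from the transition formula'' and that you would check it. It does not follow from the transition formula alone. The paper notes that the theorem requires the scaling elements themselves to lie in $\OK$, not only their ratios. Your $t_c$ have $vt_c=-v(\theta-c)$, which is negative whenever $v(\theta-c)>0$.

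\textbf{The missing idea} is a uniform upper bound on the approximation values, which then allows a common rescaling:
\begin{itemize}
\item Fix a nonidentity $K$-embedding $\tau$ of $L$ into a valued algebraic closure.
\item Unibranchedness gives $v(\tau\theta-c)=v(\theta-c)$ for every $c\in K$. Hence $v(\theta-c)\leq v(\theta-\tau\theta)$, and this bound is finite because $\tau\theta\neq\theta$ by separability.
\item Since $vK$ is cofinal in $vL$, choose $k\in K^\times$ with $vk\geq v(\theta-\tau\theta)$. Replace $t_c$ by $a_c=kt_c$.
\end{itemize}
Then every $a_c$ lies in $\OK$, and the ratios $a_c/a_d=t_c/t_d$ are unchanged. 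So the compatibility and divisibility relations from your step~1 still hold. With this rescaling added, your argument covers the full statement.
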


\begin{proof}
Assume first that the extension is pure.  For $y=P(\theta)\in\OL$,
$\deg P<n$, choose $c$ with $\nu_c(P)=vy$.  If
$P=\sum_i P_{i,c}(X-c)^i$, then
$v(P_{i,c}t_c^{-i})\geq vy\geq0$, so
$y=\sum_i(P_{i,c}t_c^{-i})u_c^i\in\OK[u_c]$.
This proves exhaustion.  Order a cofinal family by $\delta_c=v(\theta-c)$ and write
$u_c=t_c(\theta-c)$.  If $\delta_c\leq\delta_d$, then
\[
 \begin{gathered}
u_c=\frac{t_c}{t_d}u_d+t_c(d-c),\\
 v(t_c/t_d)=\delta_d-\delta_c\geq0,\\
 v(t_c(d-c))\geq0.
 \end{gathered}
\]
Thus both transition coefficients belong to $\OK$, and
$\OK[u_c]\subseteq\OK[u_d]$.  To meet the theorem's additional
requirement that its scaling elements lie in $\OK$, choose a common
$k\in K^\times$ with $v(kt_c)\geq0$ and set $a_c=kt_c$.
Such a common bound exists: unibranchedness and separability give
$\delta_c\leq v(\theta-\tau\theta)$ for any fixed nonidentity
$K$-embedding $\tau$, and the value group of a finite algebraic extension is cofinal over
$vK$.  Choose $vk$ at least this bound.  The ratios $a_c/a_d=t_c/t_d$
are unchanged.  Together with $\OL=\bigcup_c\OK[u_c]$, this verifies
all the presentation requirements of that theorem.

Conversely, suppose $vL=vK$ and the affine presentation exists.
Take a nonzero polynomial
$f\in K[X]$ of degree less than $n$, choose $a\in K^\times$ with
$va=v(f(\theta))$, and place $f(\theta)/a$ in some $\OK[b_\alpha]$.
Since $b_\alpha$ is integral and has degree $n$, its unique expansion
of degree less than $n$ has integral coefficients.  Substitution gives
$\nu_{c_\alpha}(f)\geq va$; the opposite inequality always holds,
so equality follows.  Finite collections are handled in a common
larger algebra.

The compatible affine maps in \cite[Theorem 1.1]{CKR} put every
 generator on a cofinal tail in the form $r_\alpha(\theta-c_\alpha)$
after choosing one of them as $\theta$.  Consequently, in this
 equal-value-group setting, existence of such a presentation expresses
purity on a suitable primitive generator.  It does not assert purity
on every primitive generator, nor does this argument remove the
hypothesis $vL=vK$ for the other pure classes.
\end{proof}

Indeed, \cite[Theorem 1.1]{CKR} gives
$\Omega_{\OL/\OK}\simeq U/UV$, where
\[
 \begin{aligned}
 U&=(kt_c:c\in K)\OL,\\
 V&=(h_{u_c}'(u_c):c\in K)\OL.
 \end{aligned}
\]
Here $h_{u_c}$ is the minimal polynomial of $u_c$.
Writing $I_\alpha=(t_c:c\in K)\OL$ and cancelling the common
factor $k$ yields
\[
 \Omega_{\OL/\OK}\simeq I_\alpha/(I_\alpha V).
\]
In the pure immediate case the derivative calculation below gives
$V=\Dzero$, and $v(I_\alpha V)$ is generated by
$v(g'(\theta))-nD_1(\theta,K)$.  Thus $I_\alpha V=I_\beta$,
recovering the corresponding Novacoski--Spivakovsky quotient.
The proposition supplies the presentation needed for this alternative
derivation; it does not assert that all cases treated in
\cite{NovacoskiSpivakovsky} follow from the present immediate argument.

Expressions such as $-D_1$ and $a-D_1$ denote the final segments
generated by the negatives or translated negatives of the approximation
values.  Likewise, $mD_1$ denotes the set of scalar multiples $md$,
$d\in D_1$, with final-segment generation understood when defining an
ideal.  This convention also applies when $vL\ne vK$.

The relative differential module $\Omega_{\OL/\OK}$ is generated by
$dx$, subject to additivity, the Leibniz rule, and $da=0$ for $a\in\OK$.
We use
\[
 \begin{aligned}
 T&=\Tr_{L/K}(\OL)\OL,\\
 \CC&=\{x\in L:\Tr_{L/K}(x\OL)\subseteq\OK\},\\
 \DD&=\OL:\CC,\\
 \Dzero&=(h_b'(b):b\in\OL,\ K(b)=L)\OL.
 \end{aligned}
\]
Here $h_b$ is the monic minimal polynomial.  The all-elements ideal
$\mathfrak D_{\mathrm{all}}=(h_b'(b):b\in\OL\setminus K)\OL$
is distinct in composite degree; see \cite[Section 4.3]{KuhlmannRzepkaTopics}.
For a valuation-invariant Galois extension put
\[
 \begin{aligned}
 \mathcal J_\sigma&=((\sigma x-x)/x:x\in L^\times)\OL,\\
 I_H&=\sum_{\sigma\in H\setminus\{1\}}\mathcal J_\sigma.
 \end{aligned}
\]
Norm ideals mean ideals generated by element norms.
The logarithmic module $\Omega^{\log}_{\OL/\OK}$ additionally has
symbols $d\log x$ for $x\in L^\times$, with
$d\log(xy)=d\log x+d\log y$, $dx=x\,d\log x$ for $x\in\OL\setminus\{0\}$,
and $d\log a=0$ for $a\in K^\times$.
Absolute logarithmic differentials omit these relative base relations.

\section{Differentials and subgroup ramification ideals}
\label{sec:differentials-vs-ramification}

The computations of K\"ahler differentials and ramification ideals are
closely related, but they use the approximation data in different ways.  In
degree $p$ this distinction is largely invisible because the nontrivial
conjugate differences have a common value.  For pure extensions of arbitrary
degree, the distinction becomes essential.

\subsection{K\"ahler differentials}

Throughout this subsection, let $(L/K,v)$ be finite, separable,
immediate, unibranched, and pure on $\theta$.  Put
\[
 D=D_1(\theta,K)=v(\theta-K).
\]
The computation of \cite{NovacoskiSpivakovsky} associates with a complete family of
linear key polynomials two final segments $\alpha$ and $\beta$.  In the pure
case,
\[
 \alpha=-D,
\]
and, for a complete family $\mathcal C$ of linear key polynomials,
\[
 \beta=\operatorname{Final}_{vL}
 \{v(g'(\theta))-\nu_c(g):c\in\mathcal C\}.
\]
Here $\nu_c$ is the linear truncation defined in the preliminaries.
One has
\[
 I_\beta\subseteq I_\alpha,\qquad
 \Omega_{\OL/\OK}\simeq I_\alpha/I_\beta,
\]
and consequently
\[
 \ann\Omega_{\OL/\OK}
 =I_{\{\gamma\in vL:\gamma+\alpha\subseteq\beta\}}\cap\OL.
\]
Thus the module of differentials is represented on the value group by the
part of the numerator final segment $\alpha$ that remains before the
denominator final segment $\beta$ begins.

The segment $\alpha$ is therefore entirely approximation-theoretic, while
$\beta$ contains the algebraic relation supplied by $g$.  The equality
$\Omega_{\OL/\OK}=0$ is equivalent to $\alpha=\beta$.  When the two cuts are
different, the distance between them controls the annihilator of the
quotient.

\begin{figure}[htbp]
\centering
\begin{tikzpicture}[value diagram]
\fill[cutgreen!12] (3.2,-2.45) rectangle (7.2,-1.95);
\draw[cutguide] (3.2,.15) -- (3.2,-2.55);
\draw[cutguide] (7.2,.15) -- (7.2,-2.55);
\foreach \y in {0,-1.1,-2.2}
  \draw[value axis] (2.5,\y) -- (11.5,\y);
\node[row label,text=cutblue] at (0,0) {Numerator};
\node[row label,text=cutred] at (0,-1.1) {Denominator};
\node[row label,text=cutgreen] at (0,-2.2) {Quotient};
\draw[value ray,cutblue] (3.27,0) -- (11.2,0);
\node[open cut,draw=cutblue] at (3.2,0) {};
\draw[value ray,cutred] (7.27,-1.1) -- (11.2,-1.1);
\node[open cut,draw=cutred] at (7.2,-1.1) {};
\draw[cutgreen,line width=2pt] (3.27,-2.2) -- (7.13,-2.2);
\node[formula label,text=cutblue] at (7.4,.34)
  {$\alpha=-D_1(\theta,K)$};
\node[formula label,text=cutred] at (9,-.76) {$\beta=vI_\beta$};
\node[formula label,text=cutgreen] at (5.2,-2.85)
  {values of representatives in $I_\alpha\setminus I_\beta$};
\node[anchor=south] at (3.2,.5) {cut of $\alpha$};
\node[anchor=south] at (7.2,.5) {cut of $\beta$};
\node[anchor=west] at (10.8,-2.65) {$vL$};
\end{tikzpicture}
\caption{The differential quotient $I_\alpha/I_\beta$.
The blue and red rays lie on separate rows.  The shaded interval
represents the values of nonzero quotient representatives.
The boundary circles are schematic: endpoint membership is determined
by the actual final segments.}
\label{fig:differentials-general}
\end{figure}
For Artin--Schreier defect extensions the denominator segment is $-pD$;
for Kummer defect generators satisfying $v\theta=0$ it is $vp-pD$.
In the latter case we assume characteristic zero, residue characteristic
$p$, and a primitive $p$th root of unity in $K$.
Here $D=v(\theta-K)$; Figure~\ref{fig:differentials-degree-p}
illustrates the two translations.
\begin{figure}[htbp]
\centering
\begin{tikzpicture}[value diagram]
\node[panel title] at (0,.7) {Artin--Schreier defect};
\fill[cutgreen!12] (3.2,-2.35) rectangle (6.6,-1.85);
\foreach \y in {0,-1,-2}
  \draw[value axis] (2.5,\y) -- (11.5,\y);
\node[row label,text=cutblue] at (0,0) {$I_\alpha$};
\node[row label,text=cutred] at (0,-1) {$I_\beta$};
\node[row label,text=cutgreen] at (0,-2) {$I_\alpha/I_\beta$};
\draw[value ray,cutblue] (3.27,0) -- (11.2,0);
\node[open cut,draw=cutblue] at (3.2,0) {};
\draw[value ray,cutred] (6.67,-1) -- (11.2,-1);
\node[open cut,draw=cutred] at (6.6,-1) {};
\draw[cutgreen,line width=2pt] (3.27,-2) -- (6.53,-2);
\node[formula label,text=cutblue] at (7.4,.32) {$\alpha=-D$};
\node[formula label,text=cutred] at (8.6,-.68) {$\beta=-pD$};
\draw[cutguide] (3.2,-.15) -- (3.2,-2.35);
\draw[cutguide] (6.6,-1.15) -- (6.6,-2.35);
\draw[black!15] (0,-2.9) -- (11.5,-2.9);
\node[panel title] at (0,-3.55) {Kummer defect};
\fill[cutgreen!12] (3.2,-6.6) rectangle (8.3,-6.1);
\foreach \y in {-4.25,-5.25,-6.25}
  \draw[value axis] (2.5,\y) -- (11.5,\y);
\node[row label,text=cutblue] at (0,-4.25) {$I_\alpha$};
\node[row label,text=cutpurple] at (0,-5.25) {$I_\beta$};
\node[row label,text=cutgreen] at (0,-6.25) {$I_\alpha/I_\beta$};
\draw[value ray,cutblue] (3.27,-4.25) -- (11.2,-4.25);
\node[open cut,draw=cutblue] at (3.2,-4.25) {};
\draw[value ray,cutpurple] (8.37,-5.25) -- (11.2,-5.25);
\node[open cut,draw=cutpurple] at (8.3,-5.25) {};
\draw[cutgreen,line width=2pt] (3.27,-6.25) -- (8.23,-6.25);
\node[formula label,text=cutblue] at (7.4,-3.93) {$\alpha=-D$};
\node[formula label,text=cutpurple] at (8.8,-4.93) {$\beta=vp-pD$};
\draw[cutguide] (3.2,-4.4) -- (3.2,-6.6);
\draw[cutguide] (8.3,-5.4) -- (8.3,-6.6);
\draw[cutguide] (6.6,-5.1) -- (6.6,-5.5);
\draw[shift arrow,cutpurple] (6.6,-7.05) -- (8.3,-7.05);
\node[below,text=cutpurple] at (7.45,-7.05) {$+vp$};
\node[anchor=east,font=\footnotesize] at (6.3,-7.05) {from the $-pD$ cut};
\end{tikzpicture}
\caption{Differential segments in degree $p$, with
$D=D_1(\theta,K)$.  Both numerator cuts equal $-D$.
The Artin--Schreier denominator is $-pD$; the Kummer denominator
has the additional translation by $vp$.
The green bands represent the quotients.  The cut positions and open
circles are schematic, including the possibility of coincident cuts.}
\label{fig:differentials-degree-p}
\end{figure}

\subsection{Ramification ideals}

For a pure generator set
\[
 \beta_H=\min_{\sigma\in H\setminus\{1\}}v(\sigma\theta-\theta).
\]
The following formula connects conjugate distances to relative changes
of arbitrary field elements.
\begin{theorem}
\label{thm:pure-ramification-ideal}
Let $(L/K,v)$ be a finite Galois extension, pure on $\theta$,
whose valuation is invariant under $G=\operatorname{Gal}(L/K)$
(as is automatic over a henselian base).  For every nontrivial $H\leq G$,
\begin{equation}\label{eq:IH}
 I_H=I_{\beta_H-D_1(\theta,K)}.
\end{equation}
\end{theorem}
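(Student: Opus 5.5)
We prove the two inclusions separately. Since $I_H=\sum_{\sigma\in H\setminus\{1\}}\mathcal J_\sigma$, it suffices to prove, for each $\sigma\ne1$ with $b_\sigma=v(\sigma\theta-\theta)$, that
\[
 \mathcal J_\sigma=I_{b_\sigma-D_1(\theta,K)},
\]
where $D_1=D_1(\theta,K)$. Summing over $\sigma\in H\setminus\{1\}$ then gives the ideal of the union of the final segments $b_\sigma-D_1$. That union is $\beta_H-D_1$, because the translate by the minimal $b_\sigma$ contains all the others.

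\textbf{Inclusion $I_{b_\sigma-D_1}\subseteq\mathcal J_\sigma$.} Take $x=\theta-c$ with $c\in K$. Then
\[
 \frac{\sigma x-x}{x}=\frac{\sigma\theta-\theta}{\theta-c}
\]
has value $b_\sigma-v(\theta-c)$. As $c$ ranges over $K$, these values sweep out $b_\sigma-D_1$. Final-segment generation is automatic, since $\mathcal J_\sigma$ is an $\OL$-ideal and $I_S$ consists exactly of the elements whose value lies above some generator. This direction uses neither purity nor immediacy.

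\textbf{Inclusion $\mathcal J_\sigma\subseteq I_{b_\sigma-D_1}$.} This is the main step. Write $x\in L^\times$ as $x=P(\theta)$ with $\deg P<n$. By purity choose $c$ with $\nu_c(P)=vP(\theta)$, and expand
\[
 P=\sum_j P_{j,c}(X-c)^j .
\]
Then
\[
 \sigma x-x=\sum_j P_{j,c}\bigl((\sigma\theta-c)^j-(\theta-c)^j\bigr).
\]
Put $y=\theta-c$ and $\delta=\sigma\theta-\theta$, so $\sigma\theta-c=y+\delta$. Each difference $(y+\delta)^j-y^j$ is a sum of terms $\binom{j}{i}y^{j-i}\delta^i$ with $i\ge1$. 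Using $v\delta=b_\sigma$, each term has value at least
\[
 jv(y)+i\bigl(b_\sigma-v(y)\bigr).
\]
We need $b_\sigma\ge v(y)$. This holds because $\sigma$ preserves $v$: we have $v(\sigma\theta-c)=v(\theta-c)$, so $v\delta\ge v(y)$. Hence each term has value at least $jv(y)+b_\sigma-v(y)$. It follows that
\[
 v(\sigma x-x)\ge\nu_c(P)+b_\sigma-v(\theta-c)=vx+b_\sigma-v(\theta-c),
\]
so $v\bigl((\sigma x-x)/x\bigr)\ge b_\sigma-v(\theta-c)\in b_\sigma-D_1$.

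\textbf{Boundary issue.} We need $(\sigma x-x)/x\in I_{b_\sigma-D_1}$, and the bound shows its value is at least $b_\sigma-v(\theta-c)$ for some $c$. That value lies in the final segment generated by $b_\sigma-D_1$, so membership is immediate and no strictness issue arises. The delicate point is only that purity is applied to a single polynomial $P$ of degree $<n$, which the definition of purity provides directly.

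The main obstacle is the valuation-invariance step $b_\sigma\ge v(\theta-c)$. It is exactly where the hypothesis that $G$ preserves $v$ enters, and with it the estimate for $(y+\delta)^j-y^j$ goes through uniformly in $j$.
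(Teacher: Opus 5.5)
Your proof is correct and follows the paper's argument. You reduce to a single automorphism, obtain one inclusion from $x=\theta-c$, obtain the other from purity together with valuation invariance $v(\sigma\theta-c)=v(\theta-c)$, and finally sum over $H\setminus\{1\}$ to select the smallest $b_\sigma$. The only cosmetic difference is that you bound $(y+\delta)^j-y^j$ by the binomial expansion, where the paper factors it as $(\sigma\theta-\theta)\sum_{k}(\sigma\theta-c)^k(\theta-c)^{j-1-k}$; both give the same estimate $v(\sigma x-x)\geq vx+b_\sigma-v(\theta-c)$.
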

The henselian formulation is stated in \cite{NovacoskiRam} as a
consequence of \cite{DuttaNovacoski}.  The following argument explains
why invariance of the chosen valuation suffices.
\begin{proof}
Fix $\sigma\ne1$ and write $\beta=v(\sigma\theta-\theta)$.
For $x=P(\theta)\ne0$, $\deg P<[L:K]$, choose $c$ with
$\nu_c(P)=vx$, and put $\delta=v(\theta-c)$.
Valuation invariance gives $v(\sigma\theta-c)=\delta$.
Expanding $P$ at $c$ and factoring the difference of two powers gives
\[
 v(\sigma x-x)\geq vx+\beta-\delta.
\]
Thus every generator $\sigma x/x-1$ belongs to
$I_{\beta-D_1(\theta,K)}$.  Conversely, taking $x=\theta-c$
realizes the value $\beta-v(\theta-c)$ for every $c\in K$.
This proves the formula for the ideal generated by one automorphism.
Summing over $\sigma\in H\setminus\{1\}$ selects the smallest
$\beta_\sigma$, proving \eqref{eq:IH}.
\end{proof}

In a Galois defect extension of degree $p$, all nontrivial conjugate
differences have the same value
\[
 \delta=v(\theta-\sigma(\theta)).
\]
There is only one nontrivial subgroup and hence one ramification ideal,
whose value segment is obtained by translating $-D$ by $\delta$.

\begin{figure}[htbp]
\centering
\begin{tikzpicture}[value diagram]
\node[panel title] at (0,.85) {One conjugate-distance level};
\node[anchor=east] at (11.5,.85) {$\delta=v(\theta-\sigma\theta)$, $\sigma\ne1$};
\foreach \y in {0,-2}
  \draw[value axis] (2.5,\y) -- (11.5,\y);
\node[row label,text=cutblue] at (0,0) {Base segment};
\node[row label,text=cutpurple] at (0,-2) {$vI_G$};
\draw[value ray,cutblue] (3.27,0) -- (11.2,0);
\node[open cut,draw=cutblue] at (3.2,0) {};
\draw[value ray,cutpurple] (6.27,-2) -- (11.2,-2);
\node[open cut,draw=cutpurple] at (6.2,-2) {};
\node[formula label,text=cutblue] at (7.4,.32) {$-D_1(\theta,K)$};
\node[formula label,text=cutpurple] at (8.7,-1.68) {$\delta-D_1(\theta,K)$};
\draw[shift arrow] (3.2,-.55) -- (6.2,-1.35);
\node[fill=white,inner sep=3pt] at (4.7,-.95) {translate by $\delta$};
\end{tikzpicture}
\caption{Prime-degree ramification translates the approximation segment
by the common conjugate-distance value $\delta$.
Open boundary markers are schematic.}
\label{fig:ramification-prime}
\end{figure}

For a pure Galois extension of degree $n$, put
\[
 \beta_\sigma=v(\sigma\theta-\theta)
 \qquad(\sigma\ne1).
\]
Purity does not force these values to coincide.  Every distinct level that
is realized as a subgroup minimum translates the same final segment
$-D_1(\theta,K)$ and produces a ramification ideal.  

The differential quotient and the subgroup ramification ideals depend on
different operations on the same approximation segment.  The former
uses the derivative and truncation segment $\beta$, while the latter
uses translations by subgroup minima $\beta_H$.
Purity does not imply that the
conjugate-distance profile has a single level.

\begin{figure}[htbp]
\centering
\begin{tikzpicture}[value diagram]
\node[panel title] at (0,.75) {Several conjugate-distance levels};
\node[anchor=east] at (11.5,.75) {$\delta_1<\delta_2<\delta_3$};
\draw[cutguide] (3.2,.1) -- (3.2,-4.85);
\foreach \y in {0,-1.5,-3,-4.5}
  \draw[value axis] (2.5,\y) -- (11.5,\y);
\node[row label] at (0,0) {Base segment};
\draw[value ray,black!65] (3.27,0) -- (11.2,0);
\node[open cut,draw=black!65] at (3.2,0) {};
\node[formula label] at (7.4,.32) {$-D_1(\theta,K)$};
\foreach \y/\x/\idx/\col in {-1.5/5/1/cutblue,-3/6.7/2/cutgreen,-4.5/8.4/3/cutpurple}{
  \node[row label,text=\col] at (0,\y) {$vI_{H_\idx}$};
  \draw[value ray,\col] (\x+.07,\y) -- (11.2,\y);
  \node[open cut,draw=\col] at (\x,\y) {};
  \draw[shift arrow,\col] (3.2,\y+.48) -- (\x,\y+.48);
  \node[above,font=\footnotesize,text=\col] at ({(3.2+\x)/2},\y+.48) {$+\delta_\idx$};
  \node[formula label,text=\col] at (7.7,\y-.35)
    {$\delta_\idx-D_1(\theta,K)$};
}
\end{tikzpicture}
\caption{Each subgroup minimum $\delta_i$ translates the same base
segment into $vI_{H_i}$.
The positions are schematic, the value group need not be archimedean,
and the boundary markers do not prescribe endpoint membership.}
\label{fig:pure-branching}
\end{figure}

\section{The primitive-generator different and norm ideals}
\label{sec:naive-norms}

\subsection{The primitive-generator different}

The normalized linear presentation of the valuation ring gives a
derivative-generator formula for $\Dzero$.

\begin{theorem}
\label{thm:D0}
Assume that $(L/K,v)$ is finite, separable, immediate, unibranched, and pure
on $\theta$.  Then
\begin{equation}\label{eq:D0generators}
 \Dzero(\OL/\OK)
 =\left((\theta-c)^{1-n}g'(\theta):c\in K\right)\OL.
\end{equation}
Equivalently, its value final segment is the final segment generated by
\begin{equation}\label{eq:D0segment}
 v(g'(\theta))-(n-1)D_1(\theta,K).
\end{equation}
\end{theorem}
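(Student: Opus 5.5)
We prove the two inclusions of \eqref{eq:D0generators} separately; the segment description \eqref{eq:D0segment} then follows by taking values, because $v\bigl((\theta-c)^{1-n}g'(\theta)\bigr)=v(g'(\theta))-(n-1)v(\theta-c)$ and $c$ ranges over all of $K$.

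\emph{The generators lie in $\Dzero$.} Take the normalized elements $u_c=t_c(\theta-c)$ of Proposition~\ref{rem:CKR-purity}, with $vt_c=-v(\theta-c)$. Each $u_c$ lies in $\OL$ and satisfies $K(u_c)=L$. Its minimal polynomial is
\[
 h_{u_c}(Y)=t_c^{\,n}g(c+Y/t_c),
\]
so
\[
 h_{u_c}'(u_c)=t_c^{\,n-1}g'(\theta).
\]
Since $t_c^{n-1}$ and $(\theta-c)^{1-n}$ differ by the unit $u_c^{\,n-1}$, which has value $0$, each generator $(\theta-c)^{1-n}g'(\theta)$ belongs to $\Dzero$.

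\emph{$\Dzero$ lies in the ideal generated by these elements.} Let $b\in\OL$ with $K(b)=L$. By the exhaustion part of Proposition~\ref{rem:CKR-purity}, $b\in\OK[u_c]$ for some $c$. Write $b=P(u_c)$ with $P\in\OK[Y]$ and $\deg P<n$; since $u_c$ is integral, the expansion over the basis $1,u_c,\dots,u_c^{n-1}$ has integral coefficients.

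We use the Galois closure, or the conjugates $u_c^{(i)}$ taken in a normal closure with some extension of $v$. There
\[
 h_b'(b)=\prod_{i\ne 1}\bigl(P(u_c)-P(u_c^{(i)})\bigr).
\]
Each factor is divisible by $u_c-u_c^{(i)}$, with quotient
\[
 \frac{P(u_c)-P(u_c^{(i)})}{u_c-u_c^{(i)}}=\sum_j p_j\sum_{k}u_c^{k}\bigl(u_c^{(i)}\bigr)^{j-1-k}.
\]
This quotient is integral, because all conjugates of an integral element are integral for any extension of $v$. Hence
\[
 v(h_b'(b))\ge v\Bigl(\prod_{i\ne1}(u_c-u_c^{(i)})\Bigr)=v(h_{u_c}'(u_c)).
\]
Therefore $h_b'(b)\in h_{u_c}'(u_c)\OL$, which is exactly one of the stated generators. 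This is the classical fact that $h_b'(b)$ is divisible by the different of the monogenic order $\OK[u_c]$, namely by $h_{u_c}'(u_c)$. An alternative, which avoids conjugates, is to use the identity $h_b'(b)\in h_{u_c}'(u_c)\,\OK[u_c]$ for $b\in\OK[u_c]$ a generator, valid for monogenic orders over any integrally closed domain.

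The main obstacle is making this divisibility rigorous while $\OL$ itself is not finite over $\OK$. We handle it entirely inside the finite free order $\OK[u_c]$, which by the first step is $\OK[Y]/(h_{u_c})$. There the Euler/trace-dual description applies: the inverse different of the order is $h_{u_c}'(u_c)^{-1}\OK[u_c]$. Moreover, for any $b$ generating the same field and lying in the order, $h_b'(b)$ lies in the order's different, since $\OK[b]\subseteq\OK[u_c]$ implies
\[
 h_{u_c}'(u_c)^{-1}\OK[u_c]\subseteq h_b'(b)^{-1}\OK[b]
\]
by duality of complementary modules. Extending scalars to $\OL$ gives the inclusion, and the two inclusions together prove the theorem.
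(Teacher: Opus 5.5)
Your proof is correct and follows essentially the paper's route. Purity gives $\OL=\bigcup_c\OK[u_c]$; the conjugate differences $P(u_c)-P(u_c^{(i)})$ are divisible by $u_c-u_c^{(i)}$ with integral quotient, so $v(h_b'(b))\geq v(h_{u_c}'(u_c))$; and $h_{u_c}'(u_c)=t_c^{n-1}g'(\theta)$ differs from $(\theta-c)^{1-n}g'(\theta)$ by the unit $u_c^{n-1}$. Your alternative via the monogenic trace dual $(\OK[b])^*=h_b'(b)^{-1}\OK[b]$ is also valid, and it matches the duality the paper uses later in Section~\ref{sec:trace-profiles}.
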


\begin{proof}
For $c\in K$, choose $t_c\in K$ with
$vt_c=-v(\theta-c)$ and put $\theta_c=t_c(\theta-c)$.  Purity gives
\(
\OL=\bigcup_c\OK[\theta_c].
\)
For any integral primitive generator $b$, choose $c$ such that
$b=F(\theta_c)$ with $F\in\OK[X]$.  For every nonidentity
$K$-embedding $\tau$ of $L$ into a valued algebraic closure,
$F(\theta_c)-F(\tau\theta_c)$ is divisible by
$\theta_c-\tau\theta_c$ with integral quotient.  Here unibranchedness
ensures that $\tau\theta_c$ is integral.  Since both generators are
primitive, multiplying over the same $n-1$ embeddings gives
$v h_b'(b)\geq v h_{\theta_c}'(\theta_c)$.
Thus $\Dzero$ is generated by the elements $h_{\theta_c}'(\theta_c)$.
The transformed minimal polynomial
$h_{\theta_c}(X)=t_c^n g(c+X/t_c)$ gives
\[
 h_{\theta_c}'(\theta_c)=t_c^{\,n-1}g'(\theta),
\]
whose value is $v(g'(\theta))-(n-1)v(\theta-c)$.
\end{proof}

The normalized-generator calculation appears in
\cite[Lemma 4.18]{KuhlmannRzepkaTopics}; the proof above establishes it
with our primitive-generator convention in arbitrary finite degree.  In particular,
\cite[Proposition 4.19]{KuhlmannRzepkaTopics} is the specialization of
Theorem~\ref{thm:D0} obtained by replacing the full derivative value by the
single conjugate-difference value available in degree $p$.

\begin{remark}[Limits of a scaled-generator presentation]
Purity in the purely ramified case does not alone imply that
$\OL$ is generated by all integral scalar multiples of a fixed
primitive element.  For example, take $K=k((t))$, with
$\operatorname{char}k\ne3$, and $\theta^3=t^2$.  The values of
$1,\theta,\theta^2$ are distinct modulo $vK$, so $\theta$ is pure.
Nevertheless, $vh<v\theta=2/3$ implies $vh\leq0$, and
\[
 \OK[\theta/h:vh<v\theta]=\OK[\theta]\ne\OL:
 \qquad \theta^2/t\in\OL\setminus\OK[\theta].
\]
Thus the derivative formula above is asserted for the pure immediate
case.  In a monogenic integral extension $\OL=\OK[b]$, the usual
formula $\Dzero=\DD=(h_b'(b))$ applies separately.
\end{remark}

\subsection{Conjugate-distance factorization}

For $\sigma\in G\setminus\{1\}$ put
\[
 \beta_\sigma=v(\sigma\theta-\theta).
\]
Separability gives the exact identity
\begin{equation}\label{eq:derivativesum}
 v(g'(\theta))
 =\sum_{\sigma\in G\setminus\{1\}}\beta_\sigma.
\end{equation}
The ramification ideal $I_G$ reflects only
\[
 \beta_G=\min_{\sigma\ne1}\beta_\sigma.
\]
Combining \eqref{eq:D0segment}, \eqref{eq:IH}, and
\eqref{eq:derivativesum} gives the product formula below.

For every $\sigma\in G\setminus\{1\}$ define the
\emph{conjugate-distance ideal}
\[
 J_\sigma(\theta/K):=
 I_{\beta_\sigma-D_1(\theta,K)}.
\]
In fact this ideal is intrinsic:
\[
 J_\sigma(\theta/K)=I_{\langle\sigma\rangle}=\mathcal J_\sigma.
\]
Indeed, telescoping and valuation invariance give
$v(\sigma^i\theta-\theta)\geq v(\sigma\theta-\theta)$, so the
subgroup minimum is $\beta_\sigma$.  Similarly,
$(\sigma^i x-x)/x$ is a sum of integral multiples of conjugates of
$(\sigma x-x)/x$, with the same values, whence
$\mathcal J_{\sigma^i}\subseteq\mathcal J_\sigma$.
The pure ramification formula proves the assertion.

\begin{theorem}
\label{thm:profile-factorization}
Under the hypotheses of Theorem~\ref{thm:D0}, assume additionally that $L/K$ is Galois.
\[
 \quad
 \Dzero(\OL/\OK)
 =\prod_{\sigma\in G\setminus\{1\}}J_\sigma(\theta/K).
 \quad
\]
Consequently,
\[
 \Dzero(\OL/\OK)\subseteq I_G^{\,n-1}.
\]
If
\[
 \varepsilon_G(\theta)=
 \sum_{\sigma\ne1}(\beta_\sigma-\beta_G),
\]
then equality holds precisely when translation by
$\varepsilon_G(\theta)$ fixes the final segment
$(n-1)(\beta_G-D_1(\theta,K))$.
\end{theorem}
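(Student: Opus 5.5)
The plan is to compare value final segments, since every ideal involved is of the form $I_S$ for a final segment $S\subseteq vL$.

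\textbf{Step 1: both sides have the same value segment.} By Theorem~\ref{thm:D0}, $v\Dzero$ is the final segment generated by
\[
v(g'(\theta))-(n-1)D_1(\theta,K).
\]
Using \eqref{eq:derivativesum}, we rewrite this as
\[
\sum_{\sigma\ne1}\beta_\sigma-(n-1)D,\qquad D=D_1(\theta,K).
\]
On the other side, the product of the ideals $I_{S_\sigma}$ with $S_\sigma=\beta_\sigma-D$ is $I_{\sum_\sigma S_\sigma}$. The sum $\sum_\sigma S_\sigma$ is the final segment generated by the elements $\sum_\sigma(\beta_\sigma-d_\sigma)$ with $d_\sigma\in D$. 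This holds because a product of ideals $I_S$ is generated by products of generators. We may take the generators of $I_{\beta_\sigma-D}$ to be the elements $(\sigma\theta-\theta)/(\theta-c)$, which have exactly these values.

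\textbf{Step 2: diagonal versus independent choices.} The key point is that the set $\{\sum_\sigma d_\sigma\}$ and the diagonal set $(n-1)D$ generate the same initial segment. One inclusion is trivial, taking all $d_\sigma$ equal. For the other, we use that $D$ is totally ordered: given any family $(d_\sigma)$, set $d=\max_\sigma d_\sigma\in D$. Then
\[
\sum_\sigma d_\sigma\le(n-1)d,
\]
so every generator value on the product side lies above a diagonal one. Hence the two final segments coincide, and so do the ideals.

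\textbf{Step 3: the identification with ramification ideals.} The identification $J_\sigma=I_{\langle\sigma\rangle}$ was already explained before the statement. Combining it with Steps 1 and 2 gives the product formula.

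\textbf{Step 4: comparison with $I_G^{n-1}$.} For each $\sigma$ we have $\beta_\sigma\ge\beta_G$, so $J_\sigma\subseteq I_G$ by \eqref{eq:IH}. Taking products gives
\[
\Dzero\subseteq I_G^{n-1}.
\]
By the same diagonal argument as in Step 2, $v(I_G^{n-1})$ is the final segment $(n-1)(\beta_G-D)$, with final-segment generation understood. The value segment of $\Dzero$ is $\varepsilon_G(\theta)+(n-1)(\beta_G-D)$, because
\[
\sum_{\sigma\ne1}\beta_\sigma=(n-1)\beta_G+\varepsilon_G(\theta).
\]
Equality of the ideals is therefore equivalent to equality of these two final segments. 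That is exactly the condition that translation by $\varepsilon_G(\theta)$ fixes $(n-1)(\beta_G-D)$.

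\textbf{Main obstacle.} I expect the main obstacle to be the bookkeeping in Step 2. Product ideals are generated by products of arbitrary elements of the factors, not just the chosen generators. One must check that $I_{S}I_{S'}=I_{S+S'}$ for final segments in a valuation ring, which holds since valuation rings are Bézout-like: any $x$ with $vx\in S+S'$ factors as $ab$ with $va\in S$, $vb\in S'$, using $vL=vK$. One must also handle the possibility that $S+S'$ requires final-segment generation at a boundary with no least element. I would verify this equality directly first, and then the rest is the ordering argument above.
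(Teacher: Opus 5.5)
Your proposal is correct and follows essentially the same route as the paper. Both arguments compare value segments, use that a product of ideals in a valuation ring has the Minkowski sum as its value segment, and collapse the $(n-1)$-fold sum of $-D_1(\theta,K)$ to $-(n-1)D_1(\theta,K)$ via the maximum of finitely many approximation values; they then invoke Theorem~\ref{thm:D0}, \eqref{eq:derivativesum}, and $\beta_\sigma\geq\beta_G$ for the comparison with $I_G^{n-1}$. The ``main obstacle'' you flag is already settled by your explicit generators $(\sigma\theta-\theta)/(\theta-c)$ in Step~1, since products of generators generate the product ideal, and Step~3 is not needed because the statement is phrased in terms of $J_\sigma(\theta/K)$.
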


\begin{proof}
The value segment of a product of fractional ideals of a valuation ring is
generated by the Minkowski sum of their value segments.  Thus the product on
the right has value segment
\[
 \operatorname{Final}\left(
 \sum_{\sigma\ne1}\beta_\sigma-(n-1)D_1(\theta,K)\right).
\]
Here the final segment generated by the $(n-1)$-fold Minkowski sum of
$-D_1(\theta,K)$ agrees with that generated by
$-(n-1)D_1(\theta,K)$: one inclusion is diagonal, and for the other one uses
the largest of finitely many elements of the initial segment
$D_1(\theta,K)$.  The asserted equality now follows from
\eqref{eq:derivativesum} and Theorem~\ref{thm:D0}.  Since
$\beta_\sigma\geq\beta_G$, one has $J_\sigma\subseteq I_G$ for every
$\sigma\ne1$.  The final assertion follows by comparing the two value
segments.
\end{proof}

In the product, each conjugate-distance ideal occurs with its
multiplicity.  The ideal $I_G$ is determined by the minimum distance.

In degree $p$, valuation invariance and the telescoping identity for
$\sigma^i\theta-\theta$ imply that all nontrivial conjugate differences
have the same value: applying the inequality in both directions, using
an inverse of $i$ modulo $p$, gives equality.  This yields the exponent
$p-1$ in the prime-degree factorization.  Purity alone does not impose
a common conjugate-distance value in higher degree.

\begin{corollary}
\label{cor:ramification-level-product}
Under the hypotheses of Theorem~\ref{thm:profile-factorization}, let
$I_1,\ldots,I_r$ be the distinct ideals $I_{\langle\sigma\rangle}$,
and let $m_j$ count the nonidentity automorphisms yielding $I_j$.
Then
\[
 \Dzero=\prod_{j=1}^r I_j^{m_j},\qquad
 \sum_{j=1}^r m_j=n-1.
\]
The factors and their multiplicities are independent of the chosen
pure generator.
\end{corollary}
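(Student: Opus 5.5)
The corollary is a regrouping of Theorem~\ref{thm:profile-factorization}. First rewrite each factor $J_\sigma(\theta/K)$ there as the intrinsic ideal $I_{\langle\sigma\rangle}=\mathcal J_\sigma$; this identification was established just before that theorem via Theorem~\ref{thm:pure-ramification-ideal}. This gives
\[
 \Dzero=\prod_{\sigma\in G\setminus\{1\}}I_{\langle\sigma\rangle}.
\]
Next partition $G\setminus\{1\}$ according to the value of the ideal $I_{\langle\sigma\rangle}$. Let $I_1,\ldots,I_r$ be the distinct ideals that occur, and let $m_j$ be the size of the class belonging to $I_j$. Since multiplication of fractional $\OL$-ideals is commutative and associative, the product regroups as $\prod_j I_j^{m_j}$. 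Each nonidentity automorphism lies in exactly one class, so $\sum_j m_j=|G|-1=n-1$.

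For independence of the pure generator, the key point is that $I_{\langle\sigma\rangle}$ is defined intrinsically. Its definition as the ideal generated by $(\tau x-x)/x$ for $\tau\in\langle\sigma\rangle$ and $x\in L^\times$ makes no reference to $\theta$. The partition of $G\setminus\{1\}$ by the value of $I_{\langle\sigma\rangle}$ is therefore intrinsic, and so are the list $I_1,\ldots,I_r$ and the counts $m_j$. This already shows the factors and multiplicities are generator independent. Consistently, the left side $\Dzero$ is intrinsic as well, so the factorization does not depend on $\theta$ either.

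The one point needing care is whether the multiplicities could be read off the product ideal alone. In a valuation ring such products need not factor uniquely; for instance $\mL^2=\mL$ in dense rank one. So I would not argue through uniqueness of the factorization. Instead the argument rests entirely on the intrinsic definition of the classes of $\sigma$, as above. I would also note, via $\mathcal J_\sigma=I_{\beta_\sigma-D_1(\theta,K)}$, that two automorphisms with equal $\beta_\sigma$ give the same ideal. Different $\beta_\sigma$ might still give the same ideal if the translates of $-D_1$ coincide, but the corollary counts by ideals, so this causes no problem.
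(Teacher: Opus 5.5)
Your proposal is correct and follows the paper's proof: rewrite each $J_\sigma(\theta/K)$ as $I_{\langle\sigma\rangle}$, group equal factors, and count $|G\setminus\{1\}|=n-1$. The paper leaves generator-independence implicit. You justify it explicitly through the intrinsic definition of $I_{\langle\sigma\rangle}$, and you rightly avoid arguing from unique factorization, which fails here.
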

\begin{proof}
Group the factors in Theorem~\ref{thm:profile-factorization}, using
$J_\sigma(\theta/K)=I_{\langle\sigma\rangle}$.
\end{proof}

\begin{corollary}
\label{cor:cut-stabilizer}
Under the hypotheses of Theorem~\ref{thm:profile-factorization},
put $S=v(I_G^{n-1})$ and
$H(S)=\{\gamma\in vL:S+\gamma=S\}$.  Then $H(S)$ is a convex
subgroup of $vL$, and
\[
 \Dzero=I_G^{n-1}\quad\Longleftrightarrow\quad
 \varepsilon_G(\theta)\in H(S).
\]
If $vL$ has rank one, equality holds if and only if all nonidentity
conjugate differences have the same value.
\end{corollary}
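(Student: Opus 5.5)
The plan is to reduce everything to value segments, using Theorem~\ref{thm:profile-factorization}. That theorem identifies the value segment of $\Dzero$ as
\[
 \operatorname{Final}\Bigl(\sum_{\sigma\ne1}\beta_\sigma-(n-1)D_1(\theta,K)\Bigr)=S+\varepsilon_G(\theta).
\]
Here $S=v(I_G^{n-1})=\operatorname{Final}\bigl((n-1)\beta_G-(n-1)D_1(\theta,K)\bigr)$, the $(n-1)$-fold Minkowski sum having been handled as in that proof. Since fractional ideals $I_{S'}$ of $\OL$ are determined by their value final segments, we get $\Dzero=I_G^{n-1}$ if and only if $S+\varepsilon_G(\theta)=S$, that is, if and only if $\varepsilon_G(\theta)\in H(S)$. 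This is just the last assertion of Theorem~\ref{thm:profile-factorization} restated.

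Next I will show that $H(S)$ is a convex subgroup. It is a subgroup because $S+0=S$, and $S+\gamma=S$ implies $S=S-\gamma$; moreover, $S+\gamma=S+\gamma'=S$ gives $S+\gamma+\gamma'=S$. For convexity, take $0\le\delta\le\gamma$ with $\gamma\in H(S)$. Since $S$ is a final segment, $S+\delta\subseteq S$. Also $S=S+\gamma\subseteq S+\delta$, because $s+\gamma=(s+\gamma-\delta)+\delta$ and $s+\gamma-\delta\ge s$ lies in $S$. Negative $\delta$ is handled by symmetry.

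For the rank-one statement, the convex subgroups of $vL$ are only $0$ and $vL$. First I must rule out $H(S)=vL$. That would force $S=\emptyset$ or $S=vL$. But $S$ is nonempty, and it is bounded below: $D_1(\theta,K)$ is bounded above by $v(\tau\theta-\theta)$, as noted in the proof of Proposition~\ref{rem:CKR-purity}, so $S\ne vL$. Hence $H(S)=0$, and equality holds if and only if $\varepsilon_G(\theta)=0$. Since each summand $\beta_\sigma-\beta_G\ge0$, this happens exactly when all $\beta_\sigma$ equal $\beta_G$, i.e.\ when all nonidentity conjugate differences have the same value.

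The only delicate point is the bookkeeping that turns the ideal equality into the segment-translation statement. This needs the correspondence between ideals and final segments, together with the identification of the product's segment, and both are already supplied by Theorem~\ref{thm:profile-factorization}. The rest is elementary ordered-group reasoning.
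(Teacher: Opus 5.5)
Your proposal is correct and follows essentially the same route as the paper's proof. You reduce the ideal equality to $S+\varepsilon_G(\theta)=S$ via Theorem~\ref{thm:profile-factorization}, and you prove convexity with the same inclusions $S+\gamma\subseteq S+\delta\subseteq S$. In rank one you use that $S$ is nonempty and bounded below, since $D_1(\theta,K)$ is bounded above by the $\beta_\sigma$, to get $H(S)=\{0\}$, which forces every nonnegative summand of $\varepsilon_G(\theta)$ to vanish.
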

\begin{proof}
The stabilizer of a final segment is a subgroup.  If
$0\leq\eta\leq\gamma$ and $S+\gamma=S$, the inclusions
$S+\gamma\subseteq S+\eta\subseteq S$ show convexity.
The equality criterion is Theorem~\ref{thm:profile-factorization}.
Here $S$ is nonempty and proper: the approximation set has no maximum,
and valuation invariance bounds it above by every $\beta_\sigma$.
A rank-one ordered group has no nonzero proper convex subgroup, so
$H(S)=\{0\}$.  The nonnegative summands defining
$\varepsilon_G(\theta)$ sum to zero precisely when each is zero.
\end{proof}

\subsection{Norms of valuation ideals}

The value-theoretic part of the norm computation does not depend on prime
degree or purity.

\begin{proposition}
\label{prop:norm}
Let $(L/K,v)$ be a finite unibranched Galois extension of degree $n$.  For
every $x\in L$,
\[
 v\Nm_{L/K}(x)=n\,vx.
\]
Consequently, if an $\OL$-ideal $J$ is generated by $(x_i)_{i\in A}$, then
\[
 (\Nm_{L/K}J)\OK=(\Nm_{L/K}x_i:i\in A)\OK.
\]
If $J=I_S$, the value set of the norm ideal is the final segment of $vK$
generated by
\[
 \{ns:s\in S\}.
\]
In particular, for a pure Galois extension,
\begin{equation}\label{eq:normIH}
 v\bigl((\Nm_{L/K}I_H)\OK\bigr)
 =\operatorname{Final}_{vK}\bigl(n(\beta_H-D_1(\theta,K))\bigr).
\end{equation}
\end{proposition}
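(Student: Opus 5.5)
The plan is to prove the four assertions of Proposition~\ref{prop:norm} in order, each resting on the first.

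\textbf{Norm values.} In a Galois extension,
\[
 \Nm_{L/K}(x)=\prod_{\sigma\in G}\sigma x.
\]
Because the extension is unibranched, $v\circ\sigma$ is again an extension of $v|_K$ to $L$, so it equals $v$. Hence $v(\sigma x)=vx$ for every $\sigma\in G$. Summing over the $n$ factors gives $v\Nm_{L/K}(x)=n\,vx$; for $x=0$ both sides are $\infty$.

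\textbf{Generators of the norm ideal.} By the paper's convention, the norm ideal $(\Nm_{L/K}J)\OK$ is the $\OK$-ideal generated by the norms $\Nm(y)$ of all $y\in J$. The inclusion $\supseteq$ is immediate. For $\subseteq$, take $y\in J$ and write it as a finite sum $y=\sum a_ix_i$ with $a_i\in\OL$. Then
\[
 vy\geq\min_i v(x_i)=v(x_{i_0})
\]
for some index $i_0$. By the first step,
\[
 v\Nm(y)=n\,vy\geq n\,v(x_{i_0})=v\Nm(x_{i_0}).
\]
Since $\OK$ is a valuation ring and both elements lie in $K$, this means $\Nm(y)\in\Nm(x_{i_0})\OK$. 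This step is the only place that needs care: it works because we compare values of single generators, not norms of sums.

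\textbf{Value set when $J=I_S$.} By the first step, the values of the norms $\Nm(x)$, $x\in I_S\setminus\{0\}$, are exactly $\{ns:s\in S\}$; all of these lie in $vK$. The value set of the $\OK$-ideal they generate is therefore $\operatorname{Final}_{vK}\{ns:s\in S\}$.

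\textbf{The formula for $I_H$.} Take $S=\beta_H-D_1(\theta,K)$, which Theorem~\ref{thm:pure-ramification-ideal} gives as the value segment of $I_H$ in the pure Galois case. Theorem~\ref{thm:pure-ramification-ideal} requires the valuation to be invariant under $G$, and the first step supplies exactly this invariance. Scaling the generating set $\beta_H-D_1(\theta,K)$ by $n$ and then taking final segments gives the same result as scaling the whole final segment. Indeed, if $s\geq\beta_H-d$ then $ns\geq n(\beta_H-d)$, so both descriptions have the same generators up to domination. This yields \eqref{eq:normIH}.

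I expect no serious obstacle. The points to get right are the convention that the norm ideal is generated by element norms, and the use of unibranchedness to obtain $G$-invariance of $v$.
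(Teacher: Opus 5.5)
Your proof is correct and follows the same route as the paper. Unibranchedness gives $G$-invariance of $v$, hence $v\Nm_{L/K}(x)=n\,vx$; the norm ideal is then controlled by comparing each element's norm with the norm of a single generator of smaller value; and the value-set assertions follow by passing to final segments, with Theorem~\ref{thm:pure-ramification-ideal} supplying $vI_H$. Your remarks on the zero case and on scaling the generators of $\beta_H-D_1(\theta,K)$ by $n$ just make explicit what the paper summarizes as ``passing to value segments.''
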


\begin{proof}
Unibranchedness implies $v(\sigma x)=vx$ for every $\sigma\in\Gal(L/K)$,
so the product formula for the norm gives $v\Nm(x)=n\,vx$.
If $x$ belongs to the ideal generated by the $x_i$, then $vx\geq vx_i$
for some $i$, because a valuation ring has linearly ordered ideals.
Hence $v\Nm(x)\geq v\Nm(x_i)$, and $\Nm(x)$ belongs to the ideal
generated by the $\Nm(x_i)$.  The reverse inclusion is immediate.
Passing to value segments proves the remaining assertions.
\end{proof}

The Artin--Schreier and Kummer formulas in
\cite[Propositions 4.21 and 4.22]{KuhlmannRzepkaTopics} are explicit-generator
versions of
\eqref{eq:normIH}; those polynomial expressions do not persist for a general
pure generator, but the final-segment formula does.

\section{Traces and the Dedekind different}
\label{sec:trace-profiles}

We use normalized generators and trace
duality to compare the Dedekind different with the primitive-generator
different.  Purity implies equality with the trace ideal; taking its
divisorial closure gives the Dedekind different.  Constant derivatives
then yield particularly explicit trace generators.

\subsection{Trace duality and divisorial closure}
\label{sec:dedekind-different}

For a fractional $\OL$-ideal $J$, put
\[
 J^v:=\OL:(\OL:J).
\]
This is the divisorial closure of $J$.  In rank one it has the
endpoint interpretation described in the preliminaries.

For an integral primitive element $b$ with minimal polynomial $h_b$,
the trace dual of $\OK[b]$ is
\[
 (\OK[b])^*=h_b'(b)^{-1}\OK[b].
\]
This standard monogenic calculation is also used in
\cite[Lemma 7.1]{ThatteAS}.

For every $x\in\CC$, the inclusion $\OK[b]\subseteq\OL$ places
$x$ in $(\OK[b])^*$, hence $h_b'(b)x\in\OK[b]\subseteq\OL$.
Thus $h_b'(b)\in\DD(\OL/\OK)$.  Since an inverse fractional ideal
is divisorial, it follows that
\begin{equation}\label{eq:D0v-in-D}
 \Dzero(\OL/\OK)^v\subseteq\DD(\OL/\OK).
\end{equation}
\begin{theorem}
\label{thm:pure-trace-different}
Let $(L/K,v)$ be a finite separable extension of degree $n>1$.
Assume that it is immediate, unibranched, and pure on $\theta$.
Put $J=\Dzero(\OL/\OK)$.  Then
\[
 \begin{aligned}
 T(\OL/\OK)&=J,\\
 \CC(\OL/\OK)&=\OL:_L J,\\
 \DD(\OL/\OK)&=J^v.
 \end{aligned}
\]
These assertions do not require a constant derivative or a rank-one
valuation.
\end{theorem}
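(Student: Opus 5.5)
The central identity is $T=J$; the other two follow from it by trace duality. For a fractional $\OL$-ideal $X$, $\Tr(X)\subseteq\OK$ means $\Tr(X)\OL\subseteq\OL$, because $\OK=\OL\cap K$ and $\Tr(X)$ is an $\OK$-submodule of $K$. Hence $x\in\CC$ iff $\Tr(x\OL)\OL\subseteq\OL$. Since $\Tr(x\OL)\OL=x\,T$ for $x\in K^\times$, this suggests $\CC=\OL:T$. For general $x\in L$ I would instead quote \cite[Theorem 5.4]{CKR}, which gives $\CC=\OL:T$ and $\DD=T^v$ under $vL=vK$ (in force here, since the extension is immediate). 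Once $T=J$ is established, $\CC=\OL:J$ and $\DD=J^v$ follow at once. As a consistency check, \eqref{eq:D0v-in-D} already gives $J^v\subseteq\DD$.

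To show $T=J$, I use the normalized generators $\theta_c=t_c(\theta-c)$ and the exhaustion $\OL=\bigcup_c\OK[\theta_c]$ from Proposition~\ref{rem:CKR-purity}. The derivative $h_c'(\theta_c)=t_c^{n-1}g'(\theta)$ is computed in the proof of Theorem~\ref{thm:D0}, and that theorem says these elements generate $J$.

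For $T\subseteq J$: any $y\in\OL$ lies in some $\OK[\theta_c]$. By the monogenic dual formula $(\OK[\theta_c])^*=h_c'(\theta_c)^{-1}\OK[\theta_c]$, we get
\[
\Tr\bigl(h_c'(\theta_c)^{-1}\theta_c^i\bigr)\in\OK,\qquad 0\le i<n.
\]
This is Euler's interpolation formula, which gives the value $0$ for $i<n-1$ and $1$ for $i=n-1$. Hence $\Tr(\OK[\theta_c])\subseteq h_c'(\theta_c)\,\OK[\theta_c]^{*}\cap K$. I would prefer to argue directly by valuation instead. Write $y=\sum a_i\theta_c^i$ with $a_i\in\OK$. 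Then
\[
\Tr(\theta_c^i)=t_c^i\,\Tr\bigl((\theta-c)^i\bigr),
\]
and the job is to bound $v\Tr\bigl((\theta-c)^i\bigr)$ from below by $vg'(\theta)-(n-1)v(\theta-c)-i\,v(\theta-c)$, up to a tail of the cut. That bound should come from the interpolation expression
\[
\Tr(P(\theta))=\text{the coefficient of } X^{n-1}\text{ in the remainder of } P\cdot g'\ \text{mod } g,
\]
combined with purity at a better approximant $d$ with $v(\theta-d)\ge v(\theta-c)$.

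For $J\subseteq T$: it suffices to find, for each $c$, an element of $\OL$ whose trace has value at most $v(h_c'(\theta_c))$ plus an arbitrarily small element of the cut. The natural candidate is $z=h_c'(\theta_c)^{-1}\theta_c^{n-1}$, which has trace $1$. I then need to show that $z\cdot h_{d}'(\theta_d)\in\OL$ for a finer approximant $d$, so that $\Tr\bigl(h_d'(\theta_d)z\bigr)$ has value $v h_d'(\theta_d)$ and realizes the segment \eqref{eq:D0segment}. Equivalently, I need $v\theta_c^{n-1}\ge v h_c'(\theta_c)-v h_d'(\theta_d)=(n-1)\bigl(v(\theta-d)-v(\theta-c)\bigr)$. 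Since $v\theta_c=0$, this inequality fails for a strictly finer $d$. I would therefore instead take $z=h_d'(\theta_d)^{-1}\theta_d^{n-1}$, paired with $h_d'(\theta_d)\theta_d^{n-1}\cdot h_d'(\theta_d)^{-1}=\theta_d^{n-1}$, whose trace is $\Tr(\theta_d^{n-1})$. I would relate this trace to the constant-term data via $\Tr(\theta_d^{n-1})$.

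The main obstacle is precisely this two-sided comparison between $\Tr\bigl((\theta-c)^i\bigr)$ and $g'(\theta)(\theta-c)^{1-n}$ through the cut. The difficulty is that individual traces can have accidentally high value, so I would work with final segments and let $c$ run through a cofinal family. Purity is used to pass values of the degree-$<n$ polynomials involved to linear truncations.
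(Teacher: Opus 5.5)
Citing \cite[Theorem 5.4]{CKR} for $\CC=\OL:T$ and $\DD=T^v$ is legitimate; the paper notes this as well. Everything therefore hinges on $T=J$, and the proposal proves neither inclusion, as your last paragraph concedes.

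\textbf{The inclusion $J\subseteq T$.} The missing idea is the coefficientwise identity
\[
 h_b'(X)=\Tr_{L/K}\Bigl(\frac{h_b(X)}{X-b}\Bigr),
\]
valid for every integral primitive $b$. The quotient lies in $\OK[b][X]$, so each coefficient of $h_b'$ belongs to $\Tr_{L/K}(\OL)$. Evaluating at $b\in\OL$ then gives $h_b'(b)\in T$ directly, with no purity and no valuation estimates. Your search for an integral element of small trace stalls instead: $h_c'(\theta_c)^{-1}\theta_c^{n-1}$ is not in $\OL$, and $\Tr(\theta_d^{n-1})$ is left uncomputed. That last candidate can in fact be rescued. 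Suppose $\nu_c(g')=v(g'(\theta))$ and $v(\theta-d)>v(\theta-c)$. Then only the constant term $g'(d)$ of the $(X-d)$-expansion of $g'$ attains $\nu_d(g')$. The remainder estimate described next then yields $v\Tr\bigl((\theta-d)^{n-1}\bigr)=v(g'(\theta))$, hence $v\Tr(\theta_d^{n-1})=v\bigl(h_d'(\theta_d)\bigr)$. But this has to be proved, and your proposal does not do it.

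\textbf{The inclusion $T\subseteq J$.} You name the right tool: the paper's proof uses exactly the remainder of $Pg'$ modulo $g$. What you omit is the estimate that makes it work, and that estimate is the only place unibranchedness enters.
\begin{enumerate}
\item Given $z=P(\theta)$, choose $c$ with $\nu_c(P)=vz$ and $\nu_c(g')=v(g'(\theta))$ simultaneously; purity and monotonicity of $\nu_c$ allow this.
\item Unibranchedness puts every root of $g$ at distance $\delta=v(\theta-c)$ from $c$. Hence the $(X-c)$-coefficients of $g$ satisfy $v g_{j,c}\geq(n-j)\delta$.
\item Therefore reducing $(X-c)^m$, $m\geq n$, modulo $g$ never lowers $\nu_c$. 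The remainder $R$ satisfies $\nu_c(R)\geq vz+v(g'(\theta))$.
\item The $(X-c)^{n-1}$-coefficient of $R$ is $\Tr(z)$, so $v\Tr(z)\geq vz+v(g'(\theta))-(n-1)\delta$. For $z\in\OL$ this is at least $v\bigl(t_c^{n-1}g'(\theta)\bigr)$.
\end{enumerate}
Working with $z$ itself also makes your term-by-term expansion over $\OK[\theta_c]$ unnecessary. If you keep that route, check the sign of your target bound. Since $v\Tr(\theta_c^i)=v\Tr\bigl((\theta-c)^i\bigr)-i\,v(\theta-c)$, what you need is
\[
 v\Tr\bigl((\theta-c)^i\bigr)\geq v(g'(\theta))-(n-1)v(\theta-d)+i\,v(\theta-c)
\]
for a suitable finer $d$. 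Your version has $-i\,v(\theta-c)$ in place of $+i\,v(\theta-c)$, which is too weak.
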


\begin{proof}
We first prove a trace estimate.  For $z=P(\theta)\ne0$, with
$\deg P<n$, choose $c\in K$ such that the linear truncation recovers
both $P$ and $g'$:
\[
 \nu_c(P)=vz,\qquad \nu_c(g')=v(g'(\theta)).
\]
Such a common $c$ exists by purity and the monotonicity of linear
truncations with the approximation value.  Put
$\delta=v(\theta-c)$ and $Q=X-c$.  Unibranchedness implies that every
root of $g$ has distance $\delta$ from $c$.  Thus, in
$g=Q^n+\sum_{j<n}g_{j,c}Q^j$, each coefficient satisfies
$vg_{j,c}\geq(n-j)\delta$ and $\nu_c(g)=n\delta$.

Let $R$ be the remainder of $Pg'$ on division by $g$.
Replacing a term $bQ^m$, $m\geq n$, by
$-bQ^{m-n}\sum_{j<n}g_{j,c}Q^j$ never decreases its weighted value.
Euclidean division therefore gives
\[
 \nu_c(R)\geq\nu_c(Pg')=vz+v(g'(\theta)).
\]
Lagrange interpolation identifies $\Tr_{L/K}(z)$ with the
coefficient of $Q^{n-1}$ in $R$: at each root $\theta_i$,
$R(\theta_i)=P(\theta_i)g'(\theta_i)$, and the leading coefficient
of the interpolation polynomial is $\sum_i P(\theta_i)$.
Consequently, when the trace is nonzero,
\begin{equation}\label{eq:pure-trace-estimate}
 v\Tr_{L/K}(z)\geq vz+v(g'(\theta))-(n-1)\delta.
\end{equation}
For $z\in\OL$, the right side is at least the value of the generator
$t_c^{n-1}g'(\theta)$ of $J$.  Hence $T\subseteq J$.

For the reverse inclusion, take any integral primitive element $b$
with monic minimal polynomial $h\in\OK[X]$.  The identity
\[
 h'(X)=\Tr_{L/K}\left(\frac{h(X)}{X-b}\right)
\]
is coefficientwise: the quotient is a polynomial in $\OK[b][X]$,
and summing over embeddings differentiates the product of the linear
factors of $h$.  Every coefficient of $h'$ therefore belongs to
$\Tr_{L/K}(\OL)$.  Evaluating at $b\in\OL$ gives $h'(b)\in T$.
Thus $J\subseteq T$.

The complementary-module identity can also be checked directly.
We already have $\CC\subseteq\OL:J$ from
\eqref{eq:D0v-in-D}.  If $xJ\subseteq\OL$ and $y\in\OL$, apply
\eqref{eq:pure-trace-estimate} to $z=xy$.  Its right side is
nonnegative, since $vy\geq0$ and
$x t_c^{n-1}g'(\theta)\in\OL$.  Thus $\Tr(xy)\in\OK$ and
$x\in\CC$.  Inverting gives $\DD=J^v$.
\end{proof}

The passage from the trace ideal to the complementary module is also
\cite[Theorem 5.4]{CKR}: for finite unibranched extensions with
$vL=vK$, that theorem already gives $\CC=\OL:T$ and $\DD=T^v$.
Purity gives $T=\Dzero$; the identities for $\CC$ and $\DD$
agree with this more general result.

\subsection{Pure extensions with constant derivative}

\begin{corollary}
\label{thm:constant-derivative}
Under the hypotheses of Theorem~\ref{thm:pure-trace-different},
suppose that the monic minimal polynomial of $\theta$
satisfies $g'(X)=a\in K^\times$.  For the normalized generators
$u_c=t_c(\theta-c)$, one has
\[
 \Tr_{L/K}(\OL)=(a t_c^{n-1}:c\in K)\OK.
\]
\end{corollary}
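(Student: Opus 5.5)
The plan is to combine Theorem~\ref{thm:pure-trace-different} with Theorem~\ref{thm:D0} and then pass from the $\OL$-ideal $T$ to the $\OK$-module $\Tr_{L/K}(\OL)$ itself. Under the stated hypotheses, Theorem~\ref{thm:pure-trace-different} gives $T=\Tr_{L/K}(\OL)\OL=\Dzero$. By Theorem~\ref{thm:D0}, $\Dzero$ is generated by the elements $(\theta-c)^{1-n}g'(\theta)$. With $g'(\theta)=a$ and $vt_c=-v(\theta-c)$, the generator $(\theta-c)^{1-n}a$ has the same value as $a t_c^{n-1}$. Since these are elements of a valuation ring with equal values, they generate the same principal $\OL$-ideal. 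Hence
\[
 \Tr_{L/K}(\OL)\OL=(a t_c^{n-1}:c\in K)\OL .
\]

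The remaining task is to descend from $\OL$-ideals to $\OK$-modules. Both sides are $\OK$-submodules of $K$: the trace lands in $K$, and $a t_c^{n-1}\in K$. I would use that the extension is immediate, so $vL=vK$. An $\OK$-submodule $M\subseteq K$ is determined by its value set $\{vx:x\in M\setminus\{0\}\}$, which is a final segment of $vK$. Moreover $M\OL\cap K=M$, since for $x\in K$ with $vx\ge vm$ for some $m\in M$, one has $x=(x/m)m$ with $x/m\in\OK$. Therefore the value sets of $\Tr_{L/K}(\OL)$ and of $(a t_c^{n-1}:c\in K)\OK$ both equal the value final segment of the common $\OL$-ideal, intersected with $vK=vL$. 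This gives the equality.

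Two things need care. First, I must check that $\Tr_{L/K}(\OL)$ is an $\OK$-module whose value set is a final segment. It is a module since $\Tr$ is $K$-linear and $\OL$ is an $\OK$-module, and in a valuation ring every submodule of $K$ is determined by values as above. Second, I must check the identity $M\OL\cap K=M$ for an arbitrary $\OK$-submodule $M$ of $K$. An element of $M\OL$ has value at least $vm$ for some $m\in M$, because ideals of $\OL$ are linearly ordered, so the argument above applies. The main obstacle is this descent step; everything else is a direct specialization. For the trace side I could alternatively argue directly, using the estimate \eqref{eq:pure-trace-estimate} for the inclusion $\subseteq$ and the identity $h'(b)\in\Tr(\OL)$ for the reverse inclusion. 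Note that $\Tr_{L/K}(\OL)$ already contains the coefficients of $h'$, which lie in $K$. So the $\OK$-level statement could also be proved without passing through $\OL$, but the value-set argument is cleaner.
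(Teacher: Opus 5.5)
Your proof is correct, but it takes a different route from the paper's.

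\textbf{The paper's route.} The paper does not invoke $T=\Dzero$ at all. It applies the interpolation identity
$\Tr(b^i/h'(b))=0$ for $0\le i<n-1$ and $\Tr(b^{n-1}/h'(b))=1$
to $h(X)=g(X+c)$ and $b=\theta-c$. Since $h'=a$ is constant, this gives $\Tr(u_c^i)=0$ for $i<n-1$ and $\Tr(u_c^{n-1})=at_c^{n-1}$. The nested exhaustion $\OL=\bigcup_c\OK[u_c]$, where each $\OK[u_c]=\bigoplus_{i<n}\OK u_c^i$, then yields $\Tr(\OK[u_c])=at_c^{n-1}\OK$ at every stage. The union of these stages is the claimed module.

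\textbf{Your route.} You specialize Theorems~\ref{thm:pure-trace-different} and~\ref{thm:D0} to obtain the $\OL$-ideal identity $\Tr_{L/K}(\OL)\OL=(at_c^{n-1}:c\in K)\OL$. You then descend using $M\OL\cap K=M$ for $\OK$-submodules $M\subseteq K$. That descent is sound, and it needs only $\OL\cap K=\OK$, not $vL=vK$. If $x=\sum m_iy_i\in K$, then $vx\ge vm_j$ for some $j$, and $x/m_j\in\OK$. So your appeal to immediacy at that step is superfluous.

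\textbf{What each buys.} Your argument shows that $\Tr_{L/K}(\OL)=T\cap K$ in complete generality. The constant-derivative hypothesis therefore only serves to place $g'(\theta)$ in $K$. In the immediate case, the same conclusion holds with $a$ replaced by any element of $K$ of value $v(g'(\theta))$.

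The paper's argument is independent of the trace estimate~\eqref{eq:pure-trace-estimate} and of Theorem~\ref{thm:D0}; it needs only the exhaustive presentation. It also identifies explicit elements realizing the generators: $at_c^{n-1}$ is literally $\Tr(u_c^{n-1})$, and all lower power traces vanish. This is the form used in the degree-$p^2$ defect example.
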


\begin{proof}
For a monic separable polynomial $h$ of degree $n$ and a root $b$
of degree $n$, Lagrange interpolation gives
\[
 \Tr_{K(b)/K}\bigl(b^i/h'(b)\bigr)
 =\begin{cases}0,&0\leq i<n-1,\\1,&i=n-1.\end{cases}
\]
Apply this to $h(X)=g(X+c)$ and $b=\theta-c$.  Since $h'=a$,
\[
 \Tr((\theta-c)^i)=0\ (i<n-1),\qquad
 \Tr((\theta-c)^{n-1})=a.
\]
Thus the normalized generators $u_c=t_c(\theta-c)$ satisfy
\[
 \Tr(\OL)=\bigl(a t_c^{n-1}:c\in K\bigr)\OK.
\]
\end{proof}

\begin{corollary}
\label{cor:additive-pure}
In characteristic $p$, Corollary~\ref{thm:constant-derivative} applies
whenever a pure immediate unibranched extension is generated by a root
of an irreducible polynomial
\[
 X^{p^r}+a_{r-1}X^{p^{r-1}}+\cdots+a_1X^p+a_0X-b,
 \qquad a_0\ne0.
\]
Its degree can be any $p^r$ for which these hypotheses hold.
\end{corollary}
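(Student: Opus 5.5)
The plan is to reduce the corollary to Corollary~\ref{thm:constant-derivative} by showing that the minimal polynomial has constant derivative. Write
\[
 g(X)=X^{p^r}+a_{r-1}X^{p^{r-1}}+\cdots+a_1X^p+a_0X-b .
\]
Since $\operatorname{char}K=p$, every monomial $X^{p^i}$ with $i\geq1$ has derivative $p^iX^{p^i-1}=0$. Hence $g'(X)=a_0$, a nonzero constant by hypothesis.

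Next I would check that the standing hypotheses of Theorem~\ref{thm:pure-trace-different} hold for the root $\theta$ of $g$. Since $g$ is irreducible with nonzero constant derivative, $\gcd(g,g')=1$, so $g$ is separable and $L=K(\theta)$ is a finite separable extension of degree $n=p^r$. Immediacy, unibranchedness and purity on $\theta$ are assumed in the statement. The only point needing care is that the purity is on the root $\theta$ whose monic minimal polynomial is exactly $g$. Because $g$ is monic and irreducible, it is that minimal polynomial, so no rescaling issue arises. We need $n>1$, which is automatic when $r\geq1$; the case $r=0$ gives the trivial extension and is excluded.

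Corollary~\ref{thm:constant-derivative} then applies with $a=a_0$ and gives
\[
 \Tr_{L/K}(\OL)=(a_0t_c^{\,p^r-1}:c\in K)\OK .
\]
The final sentence of the statement is only an observation: no restriction on $r$ enters the argument beyond the stated hypotheses, so the conclusion holds for any degree $p^r$ in which such a pure immediate unibranched extension exists. I expect no genuine obstacle. The only step requiring attention is verifying that separability and the identification of $g$ as the minimal polynomial follow from irreducibility together with $a_0\neq0$.
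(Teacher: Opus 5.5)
Your proposal is correct and follows the paper's proof, which consists only of the observation that in characteristic $p$ the derivative of the additive polynomial is the nonzero constant $a_0$, so Corollary~\ref{thm:constant-derivative} applies. Your extra checks are left implicit in the paper and are harmless: separability via $\gcd(g,g')=1$, reading purity as purity on the root whose monic minimal polynomial is $g$, and $n>1$.
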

\begin{proof}
The derivative is the nonzero constant $a_0$.
\end{proof}
The corollary is conditional on purity and immediacy; an additive
presentation by itself implies neither property.

\subsection{Differential annihilators}

We work under the assumptions of
Theorem~\ref{thm:pure-trace-different} and set
\[
 C=\{\gamma\in(vL)_{\geq0}:\gamma+\alpha\subseteq\beta\},
\]
\[
 S_0=\operatorname{Final}\bigl(
 v(g'(\theta))-(n-1)D_1(\theta,K)\bigr).
\]
By Theorem~\ref{thm:pure-trace-different},
\[
 \DD(\OL/\OK)=\ann\Omega_{\OL/\OK}
 \quad\Longleftrightarrow\quad
 S_0^v=C,
\]
where $S_0^v=v((I_{S_0})^v)$ denotes the divisorial closure of the
value segment (an endpoint closure in rank one).  This reduces
the equality to a comparison of the derivative-distance cut with the
key-polynomial cut defining $\ann\Omega$.

\section{Swan ideals and logarithmic upper ramification}
\label{sec:swan}

Purity gives explicit formulas for the invariants considered above,
but the passage to Swan conductors requires additional information.
A Swan conductor is attached to a character or, more generally, a
representation.  Purity is a condition on approximation by elements of
$K$; it neither specifies such a representation nor implies that the
extension is Galois or cyclic of degree $p$.  Even for a pure Galois
extension, the norm of $I_H$ from the full field $L$ need not be the
Swan ideal of a character: the prime-degree norm theorem uses the
norm from the character's quotient field.  The two norms have
different degrees, as the example in
Section~\ref{subsec:balanced-upper-filtration} demonstrates.

There is nevertheless a general logarithmic consequence in the
immediate case: ordinary and relative logarithmic differentials agree
when the value groups agree.  We prove this below.  For conductors we
apply the existing prime-degree theory to character quotients and
recover the upper filtration when $G$ is elementary abelian.  This
reconstruction does not require purity.  In a general finite group,
order-$p$ characters do not suffice to separate subgroups; already
for $G\simeq\mathbb Z/p^2\mathbb Z$, all such characters vanish on
$pG$.  Higher-order characters require additional parameter and
conductor theory.  Thus the limitation here concerns the formulas
obtained from purity, rather than the existence of a general upper
ramification theory over henselian valuation rings
\cite{KatoThatte}.

\subsection{Prime-degree parameter ideals}

The parameter descriptions in this section are statements over
henselian valuation rings.  The equal-characteristic norm and
refined-conductor theorems of \cite{ThatteAS} assume either rank one
or defectlessness.  The mixed-characteristic results of
\cite{ThatteMixed} allow arbitrary rank, and Section~8 of that paper
extends the method to equal characteristic.

For henselian cyclic degree-$p$ extensions,
\cite[Theorem 0.3]{ThatteAS} and
\cite[Theorem 1.3]{ThatteMixed} identify the Swan ideal
$\mathcal H\subseteq\OK$ with
\begin{equation}\label{eq:thatte-swan-norm}
 \mathcal H=(\Nm_{L/K}\mathcal J_\sigma)\OK.
\end{equation}
In equal characteristic, $\mathcal H$ is generated by the reciprocals of
nonzero Artin--Schreier parameters defining the extension.  In the
Kummer case, with $z=\zeta_p-1$, it is generated by
$z^p/(h-1)$ for nonzero parameters $h$ defining the extension, with
$h\ne1$.  Thus Proposition~\ref{prop:norm} recovers the value-set part
of these theorems in the pure degree-$p$ cases:
\[
 v\mathcal H=
 \operatorname{Final}_{vK}\bigl(p(\delta-D_1(\theta,K))\bigr).
\]
Formula~\eqref{eq:normIH} extends this norm calculation to arbitrary
pure Galois degree and to each subgroup ideal.  It does not supply an
Artin--Schreier or Kummer parameter description of those higher-degree
norm ideals, nor a refined Swan conductor for them.

Proposition~3.10 and Corollary~4.5 of \cite{ThatteAS}, and Proposition~3.8 and
Corollary~4.4 of \cite{ThatteMixed}, relate defectlessness to
principality of the logarithmic ideal and of the Swan ideal (with the
stated hypotheses).  For a pure immediate unibranched extension of
arbitrary degree, $D_1(\theta,K)$ has no maximum, so every nontrivial
$I_H=I_{\beta_H-D_1(\theta,K)}$ is nonprincipal.  The same implication is not asserted for general higher-degree Galois
extensions without purity.

\subsection{Logarithmic differentials and the upper filtration}

\begin{proposition}\label{prop:ordinary-log-equal-values}
Let $(L/K,v)$ be an extension of valued fields with $vL=vK$.
The natural map
\[
 \Omega_{\OL/\OK}\longrightarrow\Omega^{\log}_{\OL/\OK}
\]
is an isomorphism.  In particular, for a finite separable, immediate,
unibranched extension pure on $\theta$,
\[
 \Omega^{\log}_{\OL/\OK}\simeq I_\alpha/I_\beta,
 \qquad
 \ann\Omega^{\log}_{\OL/\OK}=\ann\Omega_{\OL/\OK}.
\]
\end{proposition}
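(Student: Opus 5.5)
I would prove the isomorphism by writing down an explicit inverse map $\Omega^{\log}_{\OL/\OK}\to\Omega_{\OL/\OK}$. The key input is that $vL=vK$ lets every nonzero element of $L$ be rescaled into a unit by an element of $K$. The natural map $\phi$ sends $dx\mapsto dx$. It is surjective once every symbol $d\log x$ is shown to lie in the image of $\Omega_{\OL/\OK}$. Given $x\in L^\times$, choose $a\in K^\times$ with $va=vx$ and put $u=x/a\in\OL^\times$. Then
\[
 d\log x=d\log a+d\log u=d\log u=u^{-1}\,du,
\]
because $d\log a=0$ and $du=u\,d\log u$. So $d\log x$ is the image of $u^{-1}du$, and $\phi$ is surjective.

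For injectivity I would define $\psi:\Omega^{\log}_{\OL/\OK}\to\Omega_{\OL/\OK}$ on generators by $\psi(dx)=dx$ and
\[
 \psi(d\log x)=u^{-1}du,\qquad x=au,\ a\in K^\times,\ u\in\OL^\times.
\]
The checks go in this order.
\begin{enumerate}
\item \emph{Independence of the factorization.} If $au=a'u'$, then $u'=(a/a')u$ with $a/a'\in K^\times$. Since $vu=vu'=0$, the ratio $a/a'$ lies in $\OK^\times$, so $d(a/a')=0$ and $u'^{-1}du'=u^{-1}du$ by Leibniz.
\item \emph{Additivity of $\psi\circ d\log$.} From $xy=(aa')(uu')$ we get $d(uu')/(uu')=du/u+du'/u'$.
\item \emph{The relation $dx=x\,d\log x$ for $x\in\OL\setminus\{0\}$.} Here $x\,u^{-1}du=a\,du=d(au)=dx$, using $a\in\OK$ (this holds because $va=vx\geq0$) and $da=0$.
\item \emph{The relation $d\log a=0$ for $a\in K^\times$.} Take $u=1$.
\end{enumerate}
Together these show that $\psi$ respects all defining relations, hence is well defined. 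Then $\psi\circ\phi=\mathrm{id}$ on the generators $dx$, so $\phi$ is injective, and in fact $\psi$ is the inverse.

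The main obstacle is bookkeeping with the presentation of $\Omega^{\log}$: it must include exactly the relations listed in the preliminaries, and step~1 is where $vL=vK$ is genuinely used. If $\Omega^{\log}$ is defined as a quotient of $\Omega_{\OL/\OK}\oplus(\OL\otimes L^\times)$, the same argument applies after checking $\OL$-linearity of $1\otimes x\mapsto u^{-1}du$, which the formula gives.

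The second assertion then follows directly. The isomorphism transports the identification $\Omega_{\OL/\OK}\simeq I_\alpha/I_\beta$ recalled in Section~\ref{sec:differentials-vs-ramification} (from \cite{NovacoskiSpivakovsky}), which applies since immediacy gives $vL=vK$. Isomorphic $\OL$-modules have equal annihilators, which gives $\ann\Omega^{\log}_{\OL/\OK}=\ann\Omega_{\OL/\OK}$.
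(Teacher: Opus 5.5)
Your proposal is correct and follows essentially the paper's own proof. The paper likewise defines $u^{-1}du$ for $x=au$ with $a\in K^\times$ and $u\in\OL^\times$, checks independence of $a$, additivity, vanishing on $K^\times$, and $dx=x\,u^{-1}du$ for integral $x$, and uses $d\log x=d\log u=u^{-1}du$ to obtain the inverse. It then transports $\Omega_{\OL/\OK}\simeq I_\alpha/I_\beta$ across the isomorphism, just as you do.
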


\begin{proof}
For $x\in L^\times$, choose $a\in K^\times$ with $va=vx$ and
write $x=au$, where $u\in\OL^\times$.  Define
\[
 \ell(x)=u^{-1}du\in\Omega_{\OL/\OK}.
\]
This does not depend on the choice of $a$: replacing $a$ by another
such element multiplies $u$ by a unit of $\OK$, whose differential
vanishes.  The Leibniz rule gives
$\ell(xy)=\ell(x)+\ell(y)$, and $\ell(a)=0$ for $a\in K^\times$.
If $x\in\OL\setminus\{0\}$, the chosen $a$ lies in $\OK$, so
$dx=a\,du=x\ell(x)$.  These are precisely the additional relations
for relative logarithmic differentials.  Consequently
$d\log x\mapsto\ell(x)$, together with $dx\mapsto dx$, defines an
inverse to the natural map: in the logarithmic module,
$d\log x=d\log u=u^{-1}du$.
The final assertion follows from
\eqref{eq:intro-differentials}.
\end{proof}

The proposition uses equality of value groups, not purity.  If the
value group increases, an arbitrary $x\in L^\times$ need not be a
$K$-multiple of an integral unit, so this proof does not apply.
The ordinary differential quotient alone therefore does not provide
the same identification for every pure extension.  Nor does the
identification above construct a refined Swan map, whose definition
also involves a character and its parameter ideal.

Theorems~0.5 and~1.5 of \cite{ThatteAS} and \cite{ThatteMixed}
construct a refined Swan conductor and an isomorphism
\[
 \Omega^{\log}_{\OL/\OK}/
 \mathcal J_\sigma\Omega^{\log}_{\OL/\OK}
 \simeq\mathcal J_\sigma/\mathcal J_\sigma^2,
\]
compatible with norm maps.  These are cyclic degree-$p$ statements
under the hypotheses specified above.  Proposition~\ref{prop:ordinary-log-equal-values}
identifies the differential modules when $vL=vK$, but does not extend
these particular character and norm maps to arbitrary degree.

\cite[Theorem 1.1]{KatoThatte} characterizes the Swan ideal by the
logarithmic upper ramification filtration.  If $\overline{\mathcal O}$
is the valuation ring in a separable closure and $\mathfrak a$ is a
nonzero proper ideal of it, the image of
$G_{\log}^{\mathfrak a}$ in a cyclic degree-$p$ quotient is the whole
quotient exactly when
\[
 \mathfrak a\cap\OK\supseteq\mathcal H;
\]
it is trivial otherwise.  Combined with
the prime-degree norm identity of \cite{ThatteAS,ThatteMixed}, this relates the norm of the prime-degree
logarithmic ramification ideal to an upper-filtration threshold.
The upper filtration and the subgroup ideals $I_H$ are distinct
constructions.  Section~\ref{subsec:balanced-swan} applies this criterion to the
depth-one defect example.

Finally, Section~1.5 and the example in Section~10.4 of
\cite{KatoThatte} show why all ideals, rather than just principal
ideals, are needed as indices: the cuts represented by
$a\overline{\mathcal O}$ and $a\mathfrak m_{\overline{\mathcal O}}$
can give different thresholds although principal test ideals do not
distinguish them.  Thus the full ideal index set retains endpoint information that is
absent from the positive principal indices.

\subsection{Reconstruction from character quotients}
Let $M/F$ be an elementary abelian $p$-extension of henselian valued
fields with residue characteristic $p$.  Put $A=\mathcal O_F$ and
$G=\Gal(M/F)$.
For a nonzero character $\chi:G\to\mathbb F_p$, let $\mathcal H_\chi$
be the Swan ideal of $M^{\ker\chi}/F$.
Let $\overline A$ be the valuation ring in a separable closure, and
let $U_{\mathfrak a}$ denote the image in $G$ of the logarithmic upper
group indexed by a nonzero proper ideal $\mathfrak a$ of $\overline A$.

\begin{proposition}
\label{prop:character-reconstruction}
With the above notation,
\begin{equation}\label{eq:character-upper-reconstruction}
 U_{\mathfrak a}=
 \bigcap_{\substack{\chi\ne0\\
          \mathfrak a\cap A\subsetneq\mathcal H_\chi}}\ker\chi,
\end{equation}
where an empty intersection denotes $G$.
\end{proposition}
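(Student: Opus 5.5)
The plan is to combine the Kato--Thatte criterion for cyclic degree-$p$ quotients with elementary duality for $\mathbb F_p$-vector spaces. Since $G$ is elementary abelian, it is an $\mathbb F_p$-vector space. For any subgroup $U\leq G$ one has
\[
 U=\bigcap_{\chi\ne0,\ \chi|_U=0}\ker\chi,
\]
with the empty intersection read as $G$ (the case $U=G$). This holds because the characters vanishing on $U$ are exactly the characters of $G/U$, and these separate points of the elementary abelian group $G/U$. So it suffices to show that, for each nonzero $\chi$,
\[
 \chi(U_{\mathfrak a})=0\iff \mathfrak a\cap A\subsetneq\mathcal H_\chi .
\]

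First, $U_{\mathfrak a}$ is a subgroup of $G$, being the image of the group $G_{\log}^{\mathfrak a}$ under the restriction map $\Gal(F^{\mathrm{sep}}/F)\to G$. Next, we use functoriality of the upper filtration under quotients. Write $Q_\chi=M^{\ker\chi}$. The image of $G_{\log}^{\mathfrak a}$ in $\Gal(Q_\chi/F)=G/\ker\chi$ equals the image of $U_{\mathfrak a}$ under $G\to G/\ker\chi$. This is automatic because both are images of the same subgroup of the absolute Galois group; upper filtrations are defined on the absolute Galois group and compatible with passage to quotients. Hence $\chi(U_{\mathfrak a})=0$ exactly when this image in the cyclic degree-$p$ quotient is trivial.

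We now apply \cite[Theorem 1.1]{KatoThatte} to the cyclic degree-$p$ extension $Q_\chi/F$, with Swan ideal $\mathcal H_\chi$. By that criterion, the image is the whole quotient exactly when $\mathfrak a\cap A\supseteq\mathcal H_\chi$, and it is trivial otherwise. Since $G/\ker\chi$ has prime order, its only subgroups are the trivial group and the whole quotient. Because $A$ is a valuation ring, its ideals are totally ordered, so the negation of $\mathfrak a\cap A\supseteq\mathcal H_\chi$ is precisely $\mathfrak a\cap A\subsetneq\mathcal H_\chi$. This yields the displayed equivalence. Substituting it into the duality formula for $U=U_{\mathfrak a}$ gives \eqref{eq:character-upper-reconstruction}.

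The main point needing care is the hypotheses under which \cite{KatoThatte} is applied. Their criterion is stated for henselian valuation rings, and here both $F$ and $M$ are henselian, so it applies directly. I would also verify that $\mathfrak a\cap A$ is a proper ideal of $A$ so that the comparison is meaningful; this follows because $\mathfrak a$ is proper in $\overline A$. No purity or defect assumption is needed, matching the remark preceding the proposition.
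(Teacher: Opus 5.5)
Your proof is correct and follows the same route as the paper. Like the paper, you apply the Kato--Thatte criterion to each cyclic degree-$p$ quotient $G/\ker\chi$ and then recover $U_{\mathfrak a}$ as the intersection of the kernels of the characters vanishing on it, using duality for elementary abelian groups. Your additional remarks make explicit two steps the paper leaves implicit: the image in $G/\ker\chi$ factors through $U_{\mathfrak a}$, and, since the ideals of $A$ are totally ordered, the failure of $\mathfrak a\cap A\supseteq\mathcal H_\chi$ is exactly $\mathfrak a\cap A\subsetneq\mathcal H_\chi$.
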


\begin{proof}
By \cite[Theorem 1.1]{KatoThatte}, the image of $U_{\mathfrak a}$ in
$G/\ker\chi$ is trivial exactly when
$\mathfrak a\cap A\subsetneq\mathcal H_\chi$.
A subgroup of an elementary abelian group is the intersection of the
kernels of all characters vanishing on it.  Applying this elementary
duality proves the formula.
\end{proof}

\section{Pure extensions of degree \texorpdfstring{$p^2$}{p-squared}: two examples}
\label{sec:examples}

\subsection{A totally ramified Artin--Schreier compositum}

\begin{example}\label{ex:AS-totally-ramified}
Let $k$ be a perfect field of characteristic $p>0$, let $K=k((t))$
with $vt=1$, and put $n=p^2$.  Consider
\[
 \begin{gathered}
u^p-u=t^{-1},\\
 w^p-w=t^{-(p+1)},\\
 L=K(u,w).
 \end{gathered}
\]
Recall that replacing a root $y$ of $y^p-y=f$ by $y-a$, with
$a\in K$, replaces the parameter by $f-(a^p-a)$.  An element
$a^p-a$ is called an Artin--Schreier coboundary, and the parameter
classes lie in the $\mathbb F_p$-vector space
$K/\wp(K)$, where $\wp(K)=\{a^p-a:a\in K\}$.
If $v(a^p-a)<0$, then $va<0$ and
\[
 v(a^p-a)=p\,va,
\]
since $p\,va<va$.  Its pole order is therefore divisible by $p$.
On the other hand, for $(A,B)\in\mathbb F_p^2\setminus\{(0,0)\}$,
\[
 v\bigl(At^{-1}+Bt^{-(p+1)}\bigr)=
 \begin{cases}
 -1,&B=0,\\
 -(p+1),&B\ne0.
 \end{cases}
\]
Neither pole order is divisible by $p$.  Hence no nonzero combination
of the two parameters lies in $\wp(K)$, proving that their classes
are linearly independent.  Artin--Schreier theory now gives $[L:K]=n$ and
\[
 G=\{\rho_{i,j}:u\mapsto u+i,\ w\mapsto w+j\mid
                       i,j\in\mathbb F_p\}\simeq C_p\times C_p.
\]
This is a degree-$p^2$ Artin--Schreier compositum; a single
Artin--Schreier equation would have degree at most $p$.

An explicit pure generator is obtained by setting
\[
 z=w-u^{p+1},\qquad \theta=(tz)^{-1}.
\]
Indeed,
\[
 z^p-z=-ut^{-p}-ut^{-1}-t^{-2}.
\]
Since $vu=-1/p$, the first term has strictly smaller value than
the others.  Therefore
\[
 vz=-1-\frac1n,\qquad v\theta=\frac1n.
\]
It follows that $e(L/K)=n$, so $L/K$ is totally ramified,
$L=K(\theta)$, and $\theta$ is a uniformizer of $\OL$.
Its minimal polynomial $g$ is Eisenstein and
$\OL=\OK[\theta]$.  Moreover, for $f=\sum_{j=0}^{n-1}a_jX^j$,
the nonzero terms $a_j\theta^j$ have distinct values modulo $vK$.
Thus $v(f(\theta))=\min_j(va_j+j/n)$, which proves purity on
$\theta$.  The approximation set $D_1(\theta,K)$ has maximum $1/n$.

The conjugate distances can be computed directly.  We have
\[
 \rho_{i,j}z-z=j-2iu-it^{-1}-i^2,
\]
whose value is $-1$ if $i\ne0$, and zero if $i=0$, $j\ne0$.
This remains valid for $p=2$, when the term $2iu$ vanishes.
Since every conjugate of $z$ has value $-1-1/n$,
\[
 \beta_{i,j}=v(\rho_{i,j}\theta-\theta)
 =\begin{cases}
  2/n,&i\ne0,\\
  1+2/n,&i=0,\ j\ne0.
 \end{cases}
\]
There are $n-p$ conjugates of the first type and $p-1$ of the second.
Consequently, for $H=\{\rho_{0,j}:j\in\mathbb F_p\}$,
\[
 \begin{gathered}
I_G=\mathfrak m_L=(\theta)\OL,\\
 I_H=t\mathfrak m_L,\\
 I_J=\mathfrak m_L\quad(J\ne\{1\},\ J\not\subseteq H).
 \end{gathered}
\]
The minimum distance determines $I_G$, while the derivative retains
both distance levels:
\[
 v(g'(\theta))
 =(n-p)\frac2n+(p-1)\left(1+\frac2n\right)
 =p+1-\frac2n.
\]
Writing $d=(p+1)n-2$, monogenicity gives
\[
 \begin{gathered}
 \Dzero=\DD=\ann\Omega_{\OL/\OK}=\mathfrak m_L^d,\\
 \Omega_{\OL/\OK}\simeq\OL/\mathfrak m_L^d.
 \end{gathered}
\]
Since $d>n-1$, we obtain
\[
 \Dzero=\DD=\ann\Omega_{\OL/\OK}
             \subsetneqq I_G^{\,n-1}.
\]
The classical trace formula, with $e=n$ and different exponent $d$,
gives
\[
 \begin{aligned}
 \Tr_{L/K}(\OL)&=\mathfrak m_K^{\lfloor d/n\rfloor}=t^p\OK,\\
 T(\OL/\OK)&=t^p\OL\subsetneqq I_G^{\,n-1}.
 \end{aligned}
\]
The norm ideals are
\[
 (\Nm_{L/K}I_G)\OK=t\OK,\qquad
 (\Nm_{L/K}I_H)\OK=t^{n+1}\OK.
\]
Finally, the lower ramification breaks are $1$ and $n+1$, since
for a uniformizer the lower index is $n\beta_{i,j}-1$.
Herbrand's change of numbering gives upper breaks $1$ and $p+1$.
Equivalently, the character $\chi_{a,b}(\rho_{i,j})=ai+bj$
is represented by $at^{-1}+bt^{-(p+1)}$, and its Swan conductor is
$1$ when $b=0$, $a\ne0$, and $p+1$ when $b\ne0$.
These are the classical Artin--Schreier and upper-numbering
computations of \cite{SerreLocal}.
\end{example}

This example disproves, for pure extensions of general degree, each naive
replacement of $p-1$ by $n-1$ in
\[
 \begin{gathered}
T(\OL/\OK)=I_G^{p-1},\\
 \Dzero=I_G^{p-1},\\
 \DD=I_G^{p-1},\\
 \ann\Omega_{\OL/\OK}=I_G^{p-1}.
 \end{gathered}
\]
In this example $\DD=\ann\Omega$, although neither ideal equals the
proposed power of the largest subgroup ramification ideal.

\subsection{An immediate elementary abelian extension}
\label{subsec:balanced-cut-example}

To examine purity in the presence of defect beyond prime degree, we use
the depth-one Hahn-field construction of
\cite[Section 5.2]{NovacoskiDistances}.  Let
$k=\overline{\mathbb F_p}$ and choose $r_1=p<r_2<\cdots$ tending to
infinity such that $\{\pi\}\cup\{1/r_i:i\geq1\}$ is linearly
independent over $\mathbb Q$, where $\pi$ is a fixed positive irrational real number.  In $k((t^{\mathbb R}))$, put
\[
 \begin{gathered}
a_\ell=t^{-\pi/p^\ell},\\
 b_\ell=t^{-1/r_\ell}-t^{-1/r_{\ell+1}},\\
 K=k(t,t^\pi)(a_\ell,b_\ell:\ell\geq1).
 \end{gathered}
\]
Write
$c_\ell=\sum_{j=1}^{\ell}a_j$ and
$d_\ell=\sum_{j=1}^{\ell}b_j$.

Let $u$ and $w$ be the negative-support Hahn representatives of the
Artin--Schreier roots below.  With $vt=1$, put
\[
 \begin{gathered}
u^p-u=t^{-\pi},\\
 w^p-w=t^{-p-1},\\
 \theta=u+tw,\\
 L=K(\theta)=K(u,w).
 \end{gathered}
\]
Thus $K$ has \emph{no finite limits}: the support of each of its
elements meets every interval $(-\infty,M)$ in a finite set.  The two degree-$p$
extensions are linearly disjoint, and the approximation calculations of \cite[Section 5.2]{NovacoskiDistances} give
\[
 \begin{gathered}
d_1(u,K)=0^-,\\
 d_1(w,K)=-1^-,\\
 d_1(\theta,K)=0^-,\\
 \operatorname{dep}(\theta)=1.
 \end{gathered}
\]
In particular, $L/K$ is pure on $\theta$ and has degree $p^2$.
The approximation cut is balanced because
$d_1(u,K)=vt+d_1(w,K)$.  For the finite approximants $c_\ell$ and
$d_\ell$ of that construction,
\[
 v\bigl(\theta-(c_\ell+d_\ell)\bigr)
 =\min\left\{-\frac{\pi}{p^{\ell+1}},-\frac1{r_{\ell+1}}\right\}
 \longrightarrow0^-.
\]
No approximation from $K$ has nonnegative value, since the negative
support of $\theta$ is infinite.  The group $vK$ is dense in
$\mathbb R$, since it contains $\mathbb Z+\pi\mathbb Z$.

The field $K$ is not assumed henselian.  We verify the valuation
hypotheses before using the general formulas.
\begin{lemma}
The chosen Hahn valuation makes $L/K$ immediate and unibranched.
\end{lemma}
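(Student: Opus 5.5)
The plan is to obtain both properties from purity of $\theta$ and the approximation data recalled above, namely $d_1(\theta,K)=0^-$ with no maximum. We do not work directly inside the Hahn field, since $K$ is not henselian.

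\textbf{Residue field and value group.} Every residue of an element of $k((t^{\mathbb R}))$ lies in $k=\overline{\mathbb F_p}$. Since $k\subseteq K$, we get $Kv=k=Lv$.

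For the value group, first show that $v(\theta-c)\in vK$ for every $c\in K$. Put $\gamma=v(\theta-c)$. Because $D_1(\theta,K)$ has no maximum, there is $c'\in K$ with $v(\theta-c')>\gamma$. Then $v(c'-c)=\gamma$, so $\gamma\in vK$.

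Now take a nonzero $x\in L$ and write $x=P(\theta)$ with $\deg P<p^2$. Purity on $\theta$, in the form recalled from \cite[Section 5.2]{NovacoskiDistances}, gives some $c$ with
\[
 vx=\nu_c(P)=\min_j\{vP_{j,c}+j\,v(\theta-c)\}.
\]
Each term on the right lies in $vK$, so $vx\in vK$. Hence $vL=vK$, and $L/K$ is immediate.

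\textbf{Unibranchedness.} The field $L=K(u,w)$ is a compositum of Artin--Schreier extensions, so $L/K$ is Galois with group $G=\{\rho_{i,j}\}$ acting by $u\mapsto u+i$, $w\mapsto w+j$. By the conjugacy theorem for extensions of a valuation to a normal extension, every prolongation of $v|_K$ to $L$ has the form $v\circ\sigma$ for some $\sigma\in G$. It therefore suffices to prove that $v$ is $G$-invariant.

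Fix $\sigma=\rho_{i,j}$. Then $\sigma\theta=\theta+e$ with $e=i+tj\in K$ and $ve\geq0$. Every element of $D_1(\theta,K)$ is negative, because $d_1(\theta,K)=0^-$. Hence
\[
 v(\sigma\theta-c)=v\bigl((\theta-c)+e\bigr)=v(\theta-c)\qquad\text{for all }c\in K.
\]
For $x=P(\theta)$, choose $c$ with $\nu_c(P)=vx$ by purity, and expand
\[
 \sigma x=\sum_j P_{j,c}(\sigma\theta-c)^j.
\]
The ultrametric inequality gives $v(\sigma x)\geq\nu_c(P)=vx$. Applying the same argument to $\sigma^{-1}$ and to $\sigma x$ gives the reverse inequality, so $v\circ\sigma=v$. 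Thus $v$ has a unique prolongation from $K$ to $L$.

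\textbf{Main obstacle.} The delicate point is the input taken from \cite[Section 5.2]{NovacoskiDistances}: that $\theta$ is pure and that $D_1(\theta,K)$ has no maximum and stays below $0$. Both steps above depend only on these facts. If they must be rechecked, the plan is to compare Hahn supports. The approximants $c_\ell+d_\ell$ show that $D_1(\theta,K)$ is cofinal in $0^-$. The fact that $K$ has no finite limits shows that no $c\in K$ yields $v(\theta-c)\geq0$, since the negative support of $\theta$ is infinite.
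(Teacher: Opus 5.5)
Your proof is correct, and it follows the paper for unibranchedness but takes a different route for immediacy.

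\textbf{Unibranchedness.} This part is the paper's argument. The paper also uses $v(\sigma\theta-\theta)\geq0>v(\theta-c)$ to get $v(\sigma\theta-c)=v(\theta-c)$. It then expands at a $c$ supplied by purity to get $v(\sigma z)\geq vz$, and concludes by transitivity of $G$ on prolongations. The paper obtains equality by iterating $\sigma$, which has finite order. You instead apply the inequality to $\sigma^{-1}$ and $\sigma x$, which is equivalent.

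\textbf{Immediacy.} The paper proves $vL=vK$ and $Lv=Kv$ together. Given $z=P(\theta)$ with $\nu_c(P)=vz$, it takes a better approximation $d\in K$ with $v(\theta-d)>v(\theta-c)$. It shows $v\bigl((\theta-c)^j-(d-c)^j\bigr)>j\,v(\theta-c)$ for $j\geq1$, hence $v(z-P(d))>vz$. So every nonzero element of $L$ has an element of $K$ with the same value and the same leading residue.

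You split the two properties.
\begin{itemize}
\item For the value group, you note $v(\theta-c)=v(c'-c)\in vK$, which uses the absence of a maximum in the same way as the paper. You combine this with the purity formula $vx=\nu_c(P)$.
\item For the residue field, you use the ambient Hahn field $k((t^{\mathbb R}))$, whose residue field $k=\overline{\mathbb F_p}$ already lies in $K$.
\end{itemize}

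Your residue argument is shorter but depends on this particular embedding. The paper's argument is intrinsic. It shows the general fact that purity on $\theta$, together with an approximation set without maximum, forces immediacy with no reference to an ambient field.
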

\begin{proof}
For $z=P(\theta)\ne0$ of degree less than $p^2$, choose $c\in K$
with $\nu_c(P)=vz$.  Choose a better approximation $d\in K$, so that
$\delta_d=v(\theta-d)>\delta_c=v(\theta-c)$.
In the expansion $P(X)=\sum_j P_{j,c}(X-c)^j$, the difference of
powers satisfies
\[
 v\bigl((\theta-c)^j-(d-c)^j\bigr)
 \geq (j-1)\delta_c+\delta_d>j\delta_c\qquad(j\geq1).
\]
Hence $v(z-P(d))>vz$.  Every nonzero element of $L$ consequently
has an approximation in $K$ of the same value and the same leading
residue.  This proves $vL=vK$ and $Lv=Kv$.

For $\sigma\in G$, we have $v(\sigma\theta-\theta)\geq0$,
whereas $v(\theta-c)<0$ for every $c\in K$.  Therefore
$v(\sigma\theta-c)=v(\theta-c)$, and the same expansion gives
$v(\sigma z)\geq\nu_c(P)=vz$.  Iterating $\sigma$, which has
finite order, forces equality.  The valuation is thus $G$-invariant.
Since the Galois group acts transitively on the prolongations of a
valuation to a finite normal extension, there is only one such
prolongation.  This proves unibranchedness.
\end{proof}
Write $\Gamma=vK=vL$.  The invariant-valuation formulation of
Theorem~\ref{thm:pure-ramification-ideal} therefore applies directly.

\begin{theorem}
\label{thm:main-defect-example}
For the extension just constructed, $[L:K]=p^2$, $vL=vK$,
$Lv=Kv$, and $\theta$ has depth one.  The chosen valuation is
unibranched and the defect is $p^2$.  Write
$H_1=\{\rho_{0,j}:j\in\mathbb F_p\}$, where
$\rho_{i,j}(\theta)=\theta+i+tj$.  Then
\[
 \begin{aligned}
 \Omega_{\OL/\OK}&\simeq\mL/t^{p-1}\mL,\\
 \ann\Omega_{\OL/\OK}&=t^{p-1}\OL,\\
 I_{H_1}&=t\mL,\\
 I_H&=\mL\quad(\{1\}\ne H\ne H_1),\\
 \Dzero=T&=t^{p-1}\mL,\\
 \Tr_{L/K}(\OL)&=t^{p-1}\mathfrak m_K,\\
 \CC&=t^{1-p}\OL,\\
 \DD&=t^{p-1}\OL.
 \end{aligned}
\]
Thus the primitive-generator different and trace ideal have open
value cuts, whereas the Dedekind different and differential
annihilator have closed cuts.

Let $F=K^h$, $E=LF$, and $A=\mathcal O_F$.  The degree and defect
remain $p^2$.  The character
$\chi_{a,b}(\rho_{i,j})=ai+bj$ has Swan ideal
\[
 \mathcal H_{a,b}=\begin{cases}
 \mathfrak m_A,&b=0, a\ne0,\\
 t^p\mathfrak m_A,&b\ne0.
 \end{cases}
\]
For a nonzero proper ideal $\mathfrak a$ in the valuation ring of a
separable closure of $F$, the logarithmic upper image is $G$ if
$\mathfrak a\cap A\supseteq\mathfrak m_A$, is $H_1$ if
$t^p\mathfrak m_A\subseteq\mathfrak a\cap A\subsetneq\mathfrak m_A$,
and is trivial otherwise.  In contrast, the full-extension norm
ideals of $I_G$ and $I_{H_1}$ are respectively $\mathfrak m_A$ and
$t^{p^2}\mathfrak m_A$.
\end{theorem}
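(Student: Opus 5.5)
The plan is to reduce everything to the three numbers already available: the approximation cut $d_1(\theta,K)=0^-$, the two conjugate-distance levels, and the derivative value. The general theorems of Sections~\ref{sec:differentials-vs-ramification}--\ref{sec:trace-profiles} then turn these into ideals.

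\textbf{Degree, defect and the cut.} The statements $[L:K]=p^2$, $vL=vK$, $Lv=Kv$, depth one and unibranchedness come from the quoted construction and the preceding lemma. Since $e=f=1$, the defect is $p^2$. Because $v(\theta-(c_\ell+d_\ell))\to0^-$ and no approximation reaches value $0$, the final segment generated by $-D$, with $D=D_1(\theta,K)$, is $\Gamma_{>0}$. Hence $I_{-D}=\mL$ and $I_{\gamma-D}=t^{\gamma}\mL$, using the density of $\Gamma$.

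\textbf{Ideals over $K$.} First I compute the conjugate distances. Since $\rho_{i,j}\theta-\theta=i+tj$, we get $\beta_{i,j}=0$ for $i\ne0$, and $\beta_{0,j}=1$ for $j\ne0$.
\begin{itemize}
\item Theorem~\ref{thm:pure-ramification-ideal} gives $I_{H_1}=I_{1-D}=t\mL$, and $I_H=I_{0-D}=\mL$ whenever $H\not\subseteq H_1$.
\item By \eqref{eq:derivativesum}, $v(g'(\theta))=(p^2-p)\cdot0+(p-1)\cdot1=p-1$.
\item Theorem~\ref{thm:D0} (or Theorem~\ref{thm:profile-factorization}) then gives $\Dzero=I_{p-1-(p^2-1)D}=t^{p-1}\mL$.
\item Theorem~\ref{thm:pure-trace-different} gives $T=\Dzero=t^{p-1}\mL$. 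Since the extension is immediate, $\Tr_{L/K}(\OL)=T\cap K=t^{p-1}\mathfrak m_K$.
\item Also from Theorem~\ref{thm:pure-trace-different}, $\CC=\OL:t^{p-1}\mL=t^{1-p}(\OL:\mL)$. Since $\Gamma$ is dense, $\OL:\mL=\OL$, so $\CC=t^{1-p}\OL$ and $\DD=t^{p-1}\OL$.
\item For the differentials, $\alpha=-D=\Gamma_{>0}$ and $\beta=\operatorname{Final}(p-1-p^2D)=(p-1,\infty)$. Formula \eqref{eq:intro-differentials} gives $\Omega\simeq\mL/t^{p-1}\mL$.
\item The annihilator consists of those $\gamma$ with $\gamma+\Gamma_{>0}\subseteq(p-1,\infty)$, i.e.\ $\gamma\ge p-1$, so $\ann\Omega=t^{p-1}\OL$.
\end{itemize}
The open/closed cut remark is then read off from these formulas.

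\textbf{Passage to $F=K^h$.} I would show that $E/F$ still has degree $p^2$ and defect $p^2$. Unibranchedness of $L/K$ makes $L\otimes_KF$ a field, and the henselization is immediate. The full-extension norms then follow from Proposition~\ref{prop:norm} with $n=p^2$: $p^2\Gamma_{>0}$ generates $\Gamma_{>0}$, so $(\Nm I_G)A=\mathfrak m_A$, and $p^2(1+\Gamma_{>0})$ gives $t^{p^2}\mathfrak m_A$.

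\textbf{Swan ideals.} For $\chi_{a,b}$, the quotient $Q=E^{\ker\chi}$ is generated by $y=au+bw$, which satisfies
\[
 y^p-y=at^{-\pi}+bt^{-p-1}.
\]
I would use the equal-characteristic description in \eqref{eq:thatte-swan-norm}: $\mathcal H_{a,b}$ is generated by reciprocals of the parameters $f-(c^p-c)$, with $c\in F$.
\begin{itemize}
\item For $b=0$, the approximants $c=ac_\ell$ give parameter values $-\pi/p^{\ell}\to0^-$. The reciprocals generate $\mathfrak m_A$.
\item For $b\ne0$, the $w$-part dominates, with $d_1(y,F)=-1^-$. The parameter values approach $-p$ from below, giving $t^p\mathfrak m_A$.
\end{itemize}
The upper images then follow from Proposition~\ref{prop:character-reconstruction}. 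If $\mathfrak a\cap A\supseteq\mathfrak m_A$, no character is excluded and the image is $G$. If $t^p\mathfrak m_A\subseteq\mathfrak a\cap A\subsetneq\mathfrak m_A$, exactly the characters with $b=0$ are excluded, and their kernels intersect in $H_1$. Otherwise every nonzero character is excluded and the image is trivial.

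\textbf{Main obstacle.} The hard step is justifying that these approximation data, and hence the parameter values, are the same over $F$ as over $K$. One must show that no $c\in F$ approximates $u$ (respectively $y$) better than elements of $K$ do, so that the infima really are $0^-$ and $-p^-$ and not larger. I would first try to use that $K^h$ is immediate and embeds into the Hahn field as elements with well-ordered support. I would then argue, as in \cite[Section 5.2]{NovacoskiDistances}, that any $c$ beating every $c_\ell$ would have to contain the infinite negative support of $u$, contradicting the ``no finite limits'' property. The difficulty is that this property is stated for $K$, and I need a version of it for $K^h$.
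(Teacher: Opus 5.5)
\textbf{There is a genuine gap at the henselization step.} Your computations over $K$ are correct and essentially follow the paper. The ideals come from Theorems~\ref{thm:pure-ramification-ideal}, \ref{thm:D0} and~\ref{thm:pure-trace-different}, using the cut $0^-$ and $v(g'(\theta))=p-1$. Your two small deviations are also fine. First, $\Tr_{L/K}(\OL)=T\cap K$ holds because $vL=vK$; the paper instead computes $\Tr(u_c^{n-1})=t_c^{n-1}q^{p-1}$ from the constant derivative. Second, you get $[E:F]=p^2$ from $L\otimes_KF$ being a field. That is cleaner than the paper's route through independence of Artin--Schreier classes over $F$.

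The gap is exactly the step you flag as the ``main obstacle'', and it cannot be skipped. The Swan ideal $\mathcal H_{a,b}$ is generated by reciprocals of $f_{a,b}-(c^p-c)$ for \emph{all} $c\in F$. For $b\ne0$ you therefore need $v(z_{a,b}-c)<-1$ for every $c\in F$, not just for $c\in K$. Likewise, the full-extension norms concern the ramification ideals of $E/F$. You feed Proposition~\ref{prop:norm} the value segments computed for $L/K$ without justifying that they persist. Your proposed route through Hahn supports would need a ``no finite limits'' property for $K^h$, which you do not have.

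The missing idea is much simpler: a rank-one valued field is dense in its henselization \cite{KuhlmannHenselApprox}.
\begin{itemize}
\item \emph{Cuts persist.} Given $z\notin F$ and $a\in F$, choose $a_0\in K$ with $v(a-a_0)>v(z-a)$. Then $v(z-a_0)=v(z-a)$, so $v(z-F)\subseteq v(z-K)$. Hence the cuts $0^-$ and $-1^-$ persist over $F$, still unattained.
\item \emph{Ramification ideals persist.} Similarly, $L$ is dense in $E=L^h$. For $x\in E^\times$ with $\sigma x\ne x$, pick $y\in L$ with $v(x-y)>v(\sigma x-x)\geq vx$. Then $vy=vx$ and $v(\sigma y-y)=v(\sigma x-x)$. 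So $I_G$ and $I_{H_1}$ keep their value segments $\Gamma_{>0}$ and $\Gamma_{>1}$ over $F$, and Proposition~\ref{prop:norm} gives $\mathfrak m_A$ and $t^{p^2}\mathfrak m_A$.
\end{itemize}

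For $b=0$ you could actually avoid density. If $v(u-c)\geq0$ for some $c\in F$, then Hensel's lemma, with $Fv$ algebraically closed, would put $u$ in $F$. For $b\neq0$, however, the residual equation satisfied by $t(w-c)$ is $X^p=\overline{t^p(t^{-p-1}-c^p+c)}$, which is inseparable. Hensel's lemma then gives nothing, and this is precisely where density is needed.
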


The differential, trace, different, and subgroup-ideal computations
are proved in the following subsections.  Proposition~\ref{prop:balanced-swan}
also verifies degree preservation under henselization and computes the
character ideals; Section~\ref{subsec:balanced-upper-filtration}
proves the filtration and norm assertions.  These computations together
prove Theorem~\ref{thm:main-defect-example}.

\subsubsection{The minimal polynomial and differential module}
Let $f=t^{-\pi}$, $h=t^{-p-1}$, and $q=t^p-t$.  Since
\[
 \theta^p-\theta-f=q w+t^p h,
\]
the monic minimal polynomial of $\theta$ is
\[
 \begin{aligned}
 Y(X)&=X^p-X-f-t^ph,\\
 g(X)&=Y(X)^p-q^{p-1}Y(X)-q^ph.
 \end{aligned}
\]
It has degree $p^2$ and vanishes at $\theta$, so it is indeed the minimal
polynomial.  Its derivative is constant:
\[
 \begin{gathered}
g'(X)=q^{p-1},\\
 vq=1,\\
 v(g'(\theta))=p-1.
 \end{gathered}
\]
Equivalently, the conjugates are $\theta+i+tj$, with
$i,j\in\mathbb F_p$, and
\[
 v(i+tj)=
 \begin{cases}
 0,&i\ne0,\\
 1,&i=0,\ j\ne0.
 \end{cases}
\]
There are $p(p-1)$ differences of value zero and $p-1$ of value one,
which again gives the derivative sum $p-1$.

For $a\in K$ put $d=v(\theta-a)<0$ and $Q=X-a$.
Every root $\theta+i+tj$ has distance $d$ from $a$.  Factoring $g$ shows
$v(g(a))=p^2d$.  The coefficient of $Q^k$ has value at least
$(p^2-k)d$, since it is a sum of products of $p^2-k$ such root
differences.  Consequently every term of the $Q$-expansion has
truncated value at least $p^2d$, with equality for the constant and
leading terms.  Hence
\[
 \begin{gathered}
\nu_Q(g)=p^2d,\\
 \alpha_Q=-d,\\
 \beta_Q=(p-1)-p^2d.
 \end{gathered}
\]
Taking the final segments generated by a cofinal linear family yields
\begin{equation}\label{eq:balanced-differential-cuts}
 \begin{aligned}
 \alpha&=\Gamma_{>0},&I_\alpha&=\mL,\\
 \beta&=\Gamma_{>p-1},&I_\beta&=t^{p-1}\mL.
 \end{aligned}
\end{equation}
Therefore the K\"ahler module and its annihilator are
\begin{equation}\label{eq:balanced-differentials}
 \begin{aligned}
 \Omega_{\OL/\OK}&\simeq\mL/t^{p-1}\mL\ne0,\\
 \ann_{\OL}\Omega_{\OL/\OK}&=t^{p-1}\OL.
 \end{aligned}
\end{equation}
Indeed, $\gamma+\Gamma_{>0}\subseteq\Gamma_{>p-1}$ holds exactly when
$\gamma\geq p-1$.  The strict cuts defining the numerator and denominator
therefore give a closed cut for the annihilator.  Nonzero quotient
representatives have values $0<vx\leq p-1$.

\begin{figure}[htbp]
\centering
\begin{tikzpicture}[value diagram]
\node[panel title] at (0,.8) {The differential module and its annihilator};
\draw[cutguide] (3.2,.1) -- (3.2,-3.55);
\draw[cutguide] (7.2,.1) -- (7.2,-3.55);
\foreach \y in {0,-1.1,-2.2,-3.3}
  \draw[value axis] (2.5,\y) -- (11.5,\y);
\node[row label,text=cutblue] at (0,0) {$I_\alpha$};
\node[row label,text=cutred] at (0,-1.1) {$I_\beta$};
\node[row label,text=cutgreen] at (0,-2.2) {$I_\alpha/I_\beta$};
\node[row label,text=cutpurple] at (0,-3.3) {$\ann\Omega$};
\draw[value ray,cutblue] (3.27,0) -- (11.2,0);
\node[open cut,draw=cutblue] at (3.2,0) {};
\draw[value ray,cutred] (7.27,-1.1) -- (11.2,-1.1);
\node[open cut,draw=cutred] at (7.2,-1.1) {};
\fill[cutgreen!12] (3.2,-2.42) rectangle (7.2,-1.98);
\draw[cutgreen,line width=2pt] (3.27,-2.2) -- (7.2,-2.2);
\node[open cut,draw=cutgreen] at (3.2,-2.2) {};
\node[closed cut,draw=cutgreen,fill=cutgreen] at (7.2,-2.2) {};
\draw[value ray,cutpurple] (7.2,-3.3) -- (11.2,-3.3);
\node[closed cut,draw=cutpurple,fill=cutpurple] at (7.2,-3.3) {};
\node[formula label,text=cutblue] at (7.5,.32) {$\alpha=\Gamma_{>0}$};
\node[formula label,text=cutred] at (9,-.78) {$\beta=\Gamma_{>p-1}$};
\node[formula label,text=cutgreen] at (8.7,-1.88) {$0<vx\leq p-1$};
\node[formula label,text=cutpurple] at (9,-2.98) {$v(\ann\Omega)=\Gamma_{\geq p-1}$};
\node[below] at (3.2,-3.55) {$0$};
\node[below] at (7.2,-3.55) {$p-1$};
\node[open cut] at (3.2,-4.35) {};
\node[anchor=west,font=\footnotesize] at (3.4,-4.35) {endpoint omitted};
\node[closed cut] at (7.2,-4.35) {};
\node[anchor=west,font=\footnotesize] at (7.4,-4.35) {endpoint included};
\end{tikzpicture}
\caption{The immediate elementary abelian example has
$\Omega\simeq\mL/t^{p-1}\mL$.
The denominator omits its lower endpoint, so a quotient representative
can have value $p-1$.  The annihilator $t^{p-1}\OL$ includes that
endpoint.  All four rows use the same value coordinates.}
\label{fig:balanced-differentials}
\end{figure}

\subsubsection{Trace ideals and the two differents}

The derivative $g'=q^{p-1}$ is constant of value $p-1$.
The approximation values $\delta_c=v(\theta-c)$ are cofinal below
zero.  Theorem~\ref{thm:D0} therefore gives
\[
 \Dzero=t^{p-1}\mL.
\]
For completeness, the trace calculation is equally explicit.
Writing $n=p^2$ and $u_c=t_c(\theta-c)$, the only nonzero power trace
among $0\leq i<n$ is
\[
 \Tr(u_c^{n-1})=t_c^{n-1}q^{p-1},\qquad
 v\Tr(u_c^{n-1})=(p-1)-(n-1)\delta_c.
\]
These values are coinitial above $p-1$, and hence
\[
 \Tr_{L/K}(\OL)=t^{p-1}\mathfrak m_K,\qquad
 T=t^{p-1}\mL=\Dzero.
\]
Theorem~\ref{thm:pure-trace-different} yields
\[
 \begin{aligned}
 \CC&=\OL:_L(t^{p-1}\mL)=t^{1-p}\OL,\\
 \DD&=\Dzero^v=t^{p-1}\OL.
 \end{aligned}
\]
Together with \eqref{eq:balanced-differentials}, this proves
\[
 T=\Dzero\subsetneqq\DD=\ann\Omega_{\OL/\OK}.
\]
The inclusion is strict because the endpoint is unattained in
$v\Dzero=\Gamma_{>p-1}$.

The all-elements convention has a different answer here.  The
subextension $K(u)/K$ has approximation cut $0^-$; its integral
normalized generators have derivative values coinitial above zero.
Their derivatives therefore generate $\mL$ after extension to $\OL$.
Conversely, every integral $b\in L\setminus K$ and every distinct
$K$-conjugate $b'$ have the same residue: the Galois action preserves
the valuation and acts trivially on $Lv=Kv$.  Thus $v(b-b')>0$,
and the product formula for the minimal-polynomial derivative gives
$v(h_b'(b))>0$.  Consequently
\[
 \mathfrak D_{\mathrm{all}}=\mL
 \supsetneqq\Dzero=t^{p-1}\mL.
\]
Hence the two derivative-ideal conventions differ in composite degree.

\subsubsection{Subgroup ramification ideals}
Write $G=\Gal(L/K)\simeq\mathbb F_p^2$ and let
$\rho_{i,j}$ send $u$ to $u+i$ and $w$ to $w+j$.  Then
$\rho_{i,j}(\theta)-\theta=i+tj$.
The subgroup
\[
 H_1=\{\rho_{0,j}:j\in\mathbb F_p\}
 =\Gal(L/K(u))
\]
is the unique order-$p$ subgroup whose nonidentity elements all have
conjugate-distance value one.  Every other nontrivial subgroup contains
an element with $i\ne0$.  Thus
\[
 \beta_H=
 \begin{cases}
 1,&H=H_1,\\
 0,&H\ne H_1,\ H\ne\{1\}.
 \end{cases}
\]
Since $D_1(\theta,K)$ has cut $0^-$, the pure ramification formula gives
\begin{equation}\label{eq:balanced-ramification}
 \begin{aligned}
 I_{H_1}&=I_{\Gamma_{>1}}=t\mL,\\
 I_H&=I_{\Gamma_{>0}}=\mL\quad(H\ne H_1,\ H\ne\{1\}).
 \end{aligned}
\end{equation}
Let $\operatorname{Ram}(L/K,v)=\{I_H:\{1\}\ne H\leq G\}$.
Then
\[
 \begin{gathered}
\operatorname{Ram}(L/K,v)=\{t\mL,\mL\},\\
 t\mL\subsetneq\mL,
\#\operatorname{Ram}(L/K,v)=2>1=\operatorname{dep}(\theta).
 \end{gathered}
\]
Both ideals are nonprincipal because their value segments have no least
element.  The ramification calculation separates the two conjugate
levels, whereas the differential denominator combines their
multiplicities through $v(g'(\theta))=p-1$.  In this example one also has
$I_\beta=I_{H_1}^{p-1}=t^{p-1}\mL$, since
$\mL^{p-1}=\mL$ in the dense value group.

\begin{figure}[htbp]
\centering
\begin{tikzpicture}[value diagram]
\node[panel title] at (0,.8) {Two subgroup ramification ideals};
\draw[cutguide] (3.2,.1) -- (3.2,-3.2);
\draw[cutguide] (6.8,-2.55) -- (6.8,-3.2);
\foreach \y in {0,-1.4,-2.8}
  \draw[value axis] (2.5,\y) -- (11.5,\y);
\node[row label] at (0,0) {Base segment};
\node[row label,text=cutblue] at (0,-1.4) {$I_H=\mL$};
\node[row label,text=cutgreen] at (0,-2.8) {$I_{H_1}=t\mL$};
\draw[value ray,black!65] (3.27,0) -- (11.2,0);
\node[open cut,draw=black!65] at (3.2,0) {};
\draw[value ray,cutblue] (3.27,-1.4) -- (11.2,-1.4);
\node[open cut,draw=cutblue] at (3.2,-1.4) {};
\draw[value ray,cutgreen] (6.87,-2.8) -- (11.2,-2.8);
\node[open cut,draw=cutgreen] at (6.8,-2.8) {};
\node[formula label] at (7.4,.32) {$-D_1(\theta,K)=\Gamma_{>0}$};
\node[formula label,text=cutblue] at (7.4,-1.08)
  {$\beta_H=0\quad(H\ne H_1,\ H\ne\{1\})$};
\node[formula label,text=cutgreen] at (9,-2.48) {$vI_{H_1}=\Gamma_{>1}$};
\draw[shift arrow,cutgreen] (3.2,-2.25) -- (6.8,-2.25);
\node[above,text=cutgreen,font=\footnotesize] at (5,-2.25) {$+1$};
\node[below] at (3.2,-3.2) {$0$};
\node[below] at (6.8,-3.2) {$1$};
\node[anchor=west,font=\footnotesize] at (2.5,-3.9)
  {$H_1=\Gal(L/K(u))$,\qquad $t\mL\subsetneq\mL$};
\end{tikzpicture}
\caption{The subgroup minima zero and one give the open rays
$v\mL=\Gamma_{>0}$ and $v(t\mL)=\Gamma_{>1}$.
The subgroup $H_1$ gives the translated segment $\Gamma_{>1}$.}
\label{fig:balanced-ramification}
\end{figure}

\subsubsection{Henselization and character Swan ideals}
\label{subsec:balanced-swan}

Let $K$, $u$, $w$, and $L=K(u,w)$ be as in
Section~\ref{subsec:balanced-cut-example}, and put
\[
 \begin{gathered}
F=K^h,\\
 E=LF,\\
 A=\mathcal O_F.
 \end{gathered}
\]
The henselization is taken with respect to the chosen Hahn valuation.
A rank-one valued field is dense in its henselization
\cite{KuhlmannHenselApprox}.  If $z$ has bounded approximation values
over $K$, it therefore lies outside $F$ and has the same approximation
cut over $F$: for $a\in F$, choose $a_0\in K$ with
$v(a-a_0)>v(z-a)$, giving $v(z-a_0)=v(z-a)$.

\begin{proposition}
\label{prop:balanced-swan}
The extension $E/F$ is immediate Galois of degree $p^2$.  Its
$p+1$ degree-$p$ subextensions are
\[
 E_\infty=F(u),\qquad E_\lambda=F(w+\lambda u)
 \quad(\lambda\in\mathbb F_p).
\]
They have Artin--Schreier parameters
\[
 f_\infty=t^{-\pi},\qquad
 f_\lambda=t^{-p-1}+\lambda t^{-\pi},
\]
approximation cuts $0^-$ and $-1^-$, respectively, and Swan ideals
\begin{equation}\label{eq:balanced-swan-ideals}
 \mathcal H_\infty=\mathfrak m_A,\qquad
 \mathcal H_\lambda=t^p\mathfrak m_A
 \quad(\lambda\in\mathbb F_p).
\end{equation}
All these subextensions have defect $p$.  Both Swan value segments
have unattained lower endpoints.
\end{proposition}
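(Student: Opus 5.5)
The plan is to reduce everything over $F$ to the approximation data already computed over $K$, and then to invoke the degree-$p$ norm identity \eqref{eq:thatte-swan-norm} of Thatte for each character quotient.

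\emph{Step 1: immediacy and degree.} I first check that $E/F$ is immediate Galois of degree $p^2$. The henselization $F/K$ is immediate, and $vK$ is dense of rank one, so $K$ is dense in $F$. By the density remark preceding the proposition, any $z\in L$ with bounded approximation set over $K$ has the same approximation cut over $F$ and lies outside $F$. Apply this to $z=u$ (cut $0^-$) and to $z=w+\lambda u$. For the latter I compute the cut over $K$: since $v(tw-\text{approximant})$ has cut $0^-$, the cut of $w$ is $-1^-$. In $w+\lambda u$ the $w$-part dominates, because $-1^-<0^-$, so the cut of $w+\lambda u$ is $-1^-$.

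Hence no element of $\mathbb F_p u+\mathbb F_p w$ outside $\mathbb F_p$ lies in $F$. Equivalently, the parameters $f_\infty=t^{-\pi}$ and $f_\lambda$ are $\mathbb F_p$-independent modulo $\wp(F)$, so $[E:F]=p^2$. The lattice of an elementary abelian group of order $p^2$ has exactly $p+1$ subgroups of order $p$. The listed fields are these subextensions, with the stated parameters, because $(w+\lambda u)^p-(w+\lambda u)=t^{-p-1}+\lambda t^{-\pi}$. Immediacy of $E/F$ follows from that of $L/K$ and $F/K$: since $E=LF$ and $L$ is dense-compatible, one can argue as in the Lemma, using approximations to get $vE=vF$ and $Ev=Fv$. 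It then follows that each degree-$p$ subextension is immediate, hence has defect $p$.

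\emph{Step 2: Swan ideals.} For each $E_\bullet=F(z)$ with $z$ pure of degree $p$, the subgroup ideal is given by Theorem~\ref{thm:pure-ramification-ideal} as $\mathcal J_\sigma=I_{\delta-D_1(z,F)}$, where $\delta=v(\sigma z-z)=v(1)=0$. Proposition~\ref{prop:norm} then gives the norm value segment $\operatorname{Final}(p(-D_1(z,F)))$, and Thatte's identity \eqref{eq:thatte-swan-norm} identifies this norm with the Swan ideal. Purity of degree-$p$ Artin--Schreier generators over a henselian field is standard (Kaplansky's lemma for degree $<p$ polynomials, as in \cite{NovacoskiSpivakovsky}). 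Rank one is the hypothesis needed for \cite{ThatteAS}.

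With cut $0^-$, the segment $-D_1$ is $\Gamma_{>0}$, so $\mathcal H_\infty$ has value set $\Gamma_{>0}$, giving $\mathfrak m_A$. With cut $-1^-$, the segment $-D_1$ is $\Gamma_{>1}$, so $p$ times it generates $\Gamma_{>p}$, giving $t^p\mathfrak m_A$. In both cases the lower endpoint is unattained, because the approximation sets have no maximum.

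\emph{Main obstacle.} The delicate point is the approximation cut of $w+\lambda u$, specifically that it is exactly $-1^-$ and is not improved by cancellation between $w$ and $\lambda u$. I would handle this through the Hahn supports. The negative support of $w$ accumulates only at $-1$ from below, beyond the first terms. The support of $u$ consists of $-\pi/p^\ell\to0^-$. By the linear independence of $\pi$ and the $1/r_i$ these supports are disjoint, so approximants of $w+\lambda u$ from $K$ (which has no finite limits) cannot get past $-1$.
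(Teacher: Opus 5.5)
Your proof is correct, but for the Swan ideals it takes a different route from the paper.

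\textbf{The paper's route.} It uses the parameter description of the Swan ideal. The parameters defining $F(z)$ are $f-(c^p-c)=(z-c)^p-(z-c)$, $c\in F$. Their values equal $p\,v(z-c)$ because every approximation value is negative. So their reciprocals directly generate the ideal with value set $\operatorname{Final}(-p\,D_1(z,F))$. This needs neither purity over $F$ nor any ramification ideal. The explicit families $c_\ell$ and $t^{-1}d_\ell+\lambda c_\ell$ then exhibit the unattained endpoints.

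\textbf{Your route.} You use the norm identity \eqref{eq:thatte-swan-norm}, computing $\mathcal J_\sigma$ by Theorem~\ref{thm:pure-ramification-ideal} and its norm by Proposition~\ref{prop:norm}. This costs an extra input: $F(z)/F$ must be pure on $z$. Your appeal to Kaplansky's lemma over the henselian base is adequate for that. In exchange, you obtain $\mathcal J_\infty=\mathfrak m_{E_\infty}$ and $\mathcal J_\lambda=t\mathfrak m_{E_\lambda}$ along the way. The paper only records these after the proposition, as a consistency check with the norm identity.

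\textbf{Immediacy.} The paper notes that unibranchedness of $L/K$ makes the decomposition group all of $G$, so $E=LK^h=L^h$ is immediate over $L$. Your sketch is best completed by density: $K$ dense in $F$ makes $L$ dense in $E=LF$, hence $vE=vL$ and $Ev=Lv$. Rerunning the Lemma instead would require purity of $\theta$ over $F$.

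\textbf{Your ``main obstacle''.} It is not an obstacle. The dominance argument in your Step~1 is already complete, and it is exactly the paper's. For the upper bound, choose $c'\in K$ with $v(u-c')>-1$. Subtracting $\lambda c'$ from an approximation of $w+\lambda u$ of value $\geq-1$ would give an approximation of $w$ of value $\geq-1$, contradicting $d_1(w,K)=-1^-$. For the lower bound, add approximants of $w$ and $\lambda u$.

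The Hahn-support explanation you offer instead is incorrect. The element $w=\sum_{k\geq1}t^{-(p+1)/p^k}$ has support accumulating at $0^-$, with a single term below $-1$. The cut $-1^-$ is a property of $K$: its approximants $t^{-1}d_\ell$ leave an error of value $-1-1/r_{\ell+1}$. This cut is imported from \cite{NovacoskiDistances}. It cannot be read off from the support of $w$, nor from the absence of finite limits alone.
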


\begin{proof}
For $\lambda\ne0$, choose $c\in K$ with $v(u-c)>-1$.
An approximation to $w+\lambda u$ of value at least $-1$ would,
after subtracting $\lambda c$, approximate $w$ to value at least
$-1$, contradicting $d_1(w,K)=-1^-$.
Conversely, put $z_{\lambda,\ell}=t^{-1}d_\ell+\lambda c_\ell$.
For sufficiently large $\ell$,
\[
 v(w+\lambda u-z_{\lambda,\ell})
 =-1-\frac1{r_{\ell+1}}\longrightarrow-1^-.
\]
For $\lambda=0$ this is the calculation for $w$.
Density preserves these cuts over $F$, as well as the cut $0^-$ for
$u$.  Every nonzero $\mathbb F_p$-linear combination of $u$ and $w$
has bounded approximation values, so no such combination belongs to
$F$.  Their Artin--Schreier classes remain independent over $F$.
It follows that $[E:F]=p^2$ and that the stated list of subextensions
is complete.  The decomposition group of the chosen valuation on
$L/K$ is consequently all of $G$; hence $E=LK^h=L^h$.
Since $L/K$ and henselizations are immediate, $E/F$ is immediate.
Every degree-$p$ intermediate extension therefore has defect $p$.

For $(a,b)\in\mathbb F_p^2\setminus\{(0,0)\}$, write
\[
 \begin{gathered}
z_{a,b}=au+bw,\\
 f_{a,b}=a t^{-\pi}+b t^{-p-1},\\
 \rho_{a,b}=\sup v(z_{a,b}-F).
 \end{gathered}
\]
Then $\rho_{a,b}=0$ if $b=0$ and $\rho_{a,b}=-1$ otherwise.
The character $\chi_{a,b}(\rho_{i,j})=ai+bj$ is represented by the
Artin--Schreier class of $f_{a,b}$.
The parameters representing this fixed character are
$f_{a,b}-(c^p-c)$, $c\in F$, and satisfy
\[
 v\bigl(f_{a,b}-(c^p-c)\bigr)=p\,v(z_{a,b}-c),
\]
since the latter approximation values are negative.
The parameter description of the Swan ideal gives
\begin{equation}\label{eq:balanced-parameter-ideal}
 \mathcal H_{a,b}=
 \left(\frac1{f_{a,b}-(c^p-c)}:c\in F\right)A
 =\{0\}\cup\{x\in F^\times:vx>-p\rho_{a,b}\}.
\end{equation}
Nonzero scalar multiples of a character give the same ideal.
This proves \eqref{eq:balanced-swan-ideals}.
For explicit cofinal parameter families, take $c=c_\ell$ on the
independent line and $c=z_{\lambda,\ell}$ on the other lines.
The reciprocal parameter values are
\[
 \frac{\pi}{p^\ell}\longrightarrow0^+,
 \qquad p+\frac{p}{r_{\ell+1}}\longrightarrow p^+,
\]
respectively.  Neither limiting endpoint is attained.
\end{proof}

The prime-degree logarithmic ideals and ordinary differential modules
are therefore
\[
 \begin{aligned}
 \mathcal J_\infty&=\mathfrak m_{E_\infty},\\
 \Omega_{\mathcal O_{E_\infty}/A}&=0,\\
 \mathcal J_\lambda&=t\mathfrak m_{E_\lambda},\\
 \Omega_{\mathcal O_{E_\lambda}/A}
 &\simeq t\mathfrak m_{E_\lambda}/t^p\mathfrak m_{E_\lambda}.
 \end{aligned}
\]
Their norms to $F$ are the ideals of
\eqref{eq:balanced-swan-ideals}, in agreement with
the prime-degree norm identity of \cite{ThatteAS,ThatteMixed}.  The differential annihilators are
$\mathcal O_{E_\infty}$ and $t^{p-1}\mathcal O_{E_\lambda}$.
The independent line has Swan cut of infimum zero, whereas the $p$
dependent lines have Swan cut of infimum $p$.  These numerical
infima do not encode the unattained endpoints of the conductor ideals.

The refined Swan maps of \cite[Theorem 0.5]{ThatteAS} send
\[
 \left[\frac1{f_{a,b}-(c^p-c)}\right]
 \longmapsto\left[d\log\bigl(f_{a,b}-(c^p-c)\bigr)\right]
\]
from $\mathcal H_{a,b}/\mathcal H_{a,b}^2$ to
$\Omega_A^{\log}/(\mathcal I_{a,b}\cap A)\Omega_A^{\log}$,
where $\mathcal I_{a,b}$ is the ordinary difference ideal of the
corresponding degree-$p$ subextension.
For $b=0$ the source is zero, since $\mathfrak m_A^2=\mathfrak m_A$.
For $b\ne0$ it is $t^p\mathfrak m_A/t^{2p}\mathfrak m_A$.
These formulas determine the sources of the refined maps.  Their
images are not computed here.

\subsubsection{The logarithmic upper filtration and norm ideals}
\label{subsec:balanced-upper-filtration}

For the extension $E/F$ of Proposition~\ref{prop:balanced-swan}, put
$H_1=\ker\chi_{1,0}=\{\rho_{0,j}:j\in\mathbb F_p\}$.
Equations~\eqref{eq:balanced-swan-ideals} and
\eqref{eq:character-upper-reconstruction} give
\begin{equation}\label{eq:balanced-upper-filtration}
 U_{\mathfrak a}=
 \begin{cases}
 G,&\mathfrak a\cap A\supseteq\mathfrak m_A,\\
 H_1,&t^p\mathfrak m_A\subseteq\mathfrak a\cap A
                   \subsetneq\mathfrak m_A,\\
 \{1\},&\mathfrak a\cap A\subsetneq t^p\mathfrak m_A.
 \end{cases}
\end{equation}
For principal indices $\mathfrak a_r=\{x\in\overline A:vx\geq r\}$,
$r\in v(\overline F^\times)$, $r>0$, the image is $H_1$ for $0<r\leq p$ and is trivial for $r>p$.
The full group occurs at the nonprincipal index
$\mathfrak m_{\overline A}$.  At $r=p$ the closed index contains the
Swan ideal with the open cut $v>p$.

The subgroup norm ideals of the full extension have different
thresholds.  Their value segments are preserved by henselization:
$L$ is dense in $E=L^h$, and a sufficiently close $y\in L$ to
$x\in E^\times$, with $\sigma x\ne x$, satisfies
$v((\sigma y-y)/y)=v((\sigma x-x)/x)$.
Hence \eqref{eq:balanced-ramification} and
Proposition~\ref{prop:norm} give
\[
 (\Nm_{E/F}I_G)A=\mathfrak m_A,\qquad
 (\Nm_{E/F}I_{H_1})A=t^{p^2}\mathfrak m_A.
\]
The positive character Swan ideals instead equal $t^p\mathfrak m_A$,
because their norm maps originate in the degree-$p$ quotient fields.
In particular, the norm of $I_G$ alone does not determine the positive
upper threshold.

\section*{Acknowledgment and disclosure of AI assistance}
The author acknowledges the use of OpenAI's ChatGPT during preparation
of this manuscript for mathematical discussion, assistance with proof
strategies and calculations, identification and checking of references,
and revision of the organization and language.  Responsibility for the
arguments, computations, references, and conclusions rests with the
author.

\end{document}